\documentclass{amsart}
\usepackage[utf8]{inputenc}
\usepackage[margin=1in]{geometry}
\usepackage{upgreek,amsthm,bbm,bm}
\usepackage{color,amssymb, comment, mathrsfs,tikz,amsmath,amsfonts,stmaryrd,tikz-cd,hyperref,appendix,cancel,faktor,verbatim}
\usepackage{amsmath,amsthm,amssymb}
\usepackage{tikz,tikz-cd,color}
\usetikzlibrary{arrows,chains,matrix,positioning,scopes}
\usepackage{enumitem}
\usepackage{mathtools}
\usepackage{comment}

\usepackage[all,cmtip]{xy} 
\xyoption{all}
\xyoption{arc}

\usepackage{amsmath, nccmath}
\usepackage{geometry}

\hypersetup{
    colorlinks=true,
    linkcolor=blue,
    filecolor=magenta,      
    urlcolor=cyan,
    citecolor=purple,
} 

\usepackage[capitalise]{cleveref}
\crefformat{equation}{(#2#1#3)}
\crefrangeformat{equation}{(#3#1#4--#5#2#6)}
\crefformat{enumi}{(#2#1#3)}
\crefrangeformat{enumi}{(#3#1#4--#5#2#6)}

\newtheorem{theorem}{Theorem}[section]
\newtheorem*{main*}{Main Theorem}
\newtheorem{lem}[theorem]{Lemma}
\newtheorem{prop}[theorem]{Proposition}
\newtheorem{corollary}[theorem]{Corollary}
\theoremstyle{definition}
\newtheorem{case}[theorem]{Case}
\newtheorem{defn}[theorem]{Definition}

\newtheorem{ex}[theorem]{Example}

\newcounter{intro}

\theoremstyle{remark}
\newtheorem{remark}[theorem]{Remark}

\newcommand{\Tor}{\ensuremath{\operatorname{Tor}}}

\usepackage[maxbibnames=9,style=alphabetic]{biblatex}
\title{Relating the Exterior Algebra of the Conormal Module to the Tor Algebra}
\date{}

\usepackage[capitalise]{cleveref}
\crefformat{equation}{(#2#1#3)}
\crefrangeformat{equation}{(#3#1#4--#5#2#6)}
\crefformat{enumi}{(#2#1#3)}
\crefrangeformat{enumi}{(#3#1#4--#5#2#6)}

\author[D. Martin]{Desiree Martin}
\address{Department of Mathematics,
Syracuse University, Syracuse, NY 13244, U.S.A.}
\email{dmarti02@syr.edu}

\begin{document}

\begin{abstract}
For any ideal $I$ (whose projective dimension need not be finite) in a local Noetherian ring $R$, we show that, if the conormal module $I/I^2$ has a free summand given by $F$, the natural map from $\bigwedge F$ to $\Tor_{*}^{R}(R/I,R/I)$ splits as algebras. To achieve this, we use dg algebra techniques and  remodel results of André and Iyengar, proving them in setting of semi-free extensions, replacing the need for semi-free $\Gamma$-extensions. In this new setting, we show that free summands of the conormal module correspond to central elements of the relative homotopy Lie algebra $\pi^*(\varphi)$ and, lastly, we provide an explicit splitting morphism in lower degrees.

\end{abstract}

\maketitle

\section{Introduction}
\label{sec:intro}

Let $R$ be a Noetherian local ring and $I$ an ideal of $R$. An important invariant studied in commutative algebra and algebraic geometry is the conormal module $I/I^2$. In algebraic geometry, one uses the conormal module capture information about the tangent behavior of a subvariety that is embedded within a larger variety. On the commutative algebra side of the story, the conormal module plays a role in characterizing types of singularities as well as ring theoretic properties of $S=R/I$. 

In 1967 Ferrand \cite{Ferrand} and Vasconcelos \cite{vasconcelos1967ideals} proved that if $I$ has finite projective dimension and the conormal module $I/I^2$ is free as an $S$-module then $I$ is a complete intersection ideal, providing us with some conditions that $I/I^2$ and $I$ must satisfy in order to show that $I$ is generated by a regular sequence. Vasconcelos then conjectured that if one weakens the condition of freeness on $I/I^2$ to one of finite projective dimension, then the same conclusion holds. Avramov and Herzog \cite{avramov1994jacobian} were able to prove the conjecture in the graded case, but in the nongraded case the conjecture was still open until more recently when Briggs proved Vasconcelos's Conjecture in \cite{Briggs}, finally putting the investigation of the conormal module when $I$ has finite projective dimension to rest. In this paper, we study the conormal module when $I$ has infinite projective dimension, as the story is more mysterious.

Moreover, when $I$ is generated by a regular sequence, it is well known that $I/I^2$ is free and that  $\Tor^{R}_{*}(S,S)$ and  the exterior algebra of the conormal module $\bigwedge I/I^2$ are isomorphic as algebras. This isomorphism seems to imply a relationship between the underlying algebra structures of $\Tor^{R}_*(S,S)$  and $\bigwedge I/I^2$. Indeed, in \cite{GotoSuzuki}, Goto and Suzuki shed some light on the relationship, showing that $\Tor^R_2(S,S)$ is $S$-free if and only if both $I/I^2$ and the first Koszul homology $H_1(I)$ of $I$ are $S$-free. Moreover, they showed under these assumptions that $\Tor^R_2(S,S) = \bigwedge^2 (I/I^2) \oplus H_1(I)$. In this paper, we continue the study of the relationship between the algebra $\Tor_{*}^{R}(S,S)$ and $S$-module $I/I^2$. In doing so, we drop the assumption that the individual $\Tor$ modules and the first Koszul homology are free. 
 
 With these conditions, we have the following theorem:

\begin{main*}[Theorem \ref{maintheoremSec3}] 
    Let $R$ be a Noetherian local ring and $I$ be an ideal of $R$ with $F$  a free summand of $I/I^2$ as an $S= R/I$-module. Then the natural map from $\bigwedge F$ to $\Tor^{R}(S,S)$ splits as a map of algebras. 
\end{main*}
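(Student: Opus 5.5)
The plan is to work with a semi-free extension $R \to A \xrightarrow{\simeq} S$, where $A = R\langle X \rangle$ (or $R[X]$ with a semi-free basis) is built by first adjoining degree-one variables $X_1 = \{x_1,\dots,x_n\}$ with $\partial x_i = f_i$ mapping to a minimal generating set of $I$, and then killing higher homology. Then $\Tor^R(S,S) = H(A \otimes_R S)$. The natural map $\bigwedge F \to \Tor^R(S,S)$ comes from $\bigwedge_S (I/I^2) \cong \bigwedge_S(S\otimes X_1) \to \Tor^R(S,S)$: the classes of the $x_i$ are cycles in $A\otimes_R S$, and $\Tor_1^R(S,S) = I/I^2$. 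Since $F$ is a free summand of $I/I^2$, I first choose a minimal generating set of $I$ whose first $r$ elements $f_1,\dots,f_r$ map to a basis of $F$, and I take a projection $p: I/I^2 \to F$ with $p(\bar f_i)=e_i$ for $i\le r$ and $p(\bar f_j)=0$ for $j>r$. This is possible after a change of generators because $I/I^2 = F \oplus G$.

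The key step is to construct a dg algebra map $\Phi : A\otimes_R S \to \bigwedge_S F$, the latter with zero differential. I do this by induction on the semi-free filtration of $A$. On degree one I set $\Phi(x_i) = e_i$ for $i \le r$ and $\Phi(x_j) = p(\bar f_j)$ expressed appropriately, which is $0$ after adjusting generators. Then I extend multiplicatively, using strict graded-commutativity of $\bigwedge F$. For each higher variable $y$ with $\partial y = z$, I must choose $\Phi(y)$ with $0 = d\Phi(y) = \Phi(\partial y)$, so the requirement is that $\Phi(z) = 0$ for every boundary-killing cycle $z$. Equivalently, $\Phi$ must vanish on the decomposable obstruction cycles. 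This is where the central-element statement from the abstract enters: I would invoke the paper's result that a free summand of $I/I^2$ corresponds to central elements of $\pi^*(\varphi)$, i.e.\ that the degree-one variables $x_1,\dots,x_r$ can be chosen so that they never appear in the quadratic (and higher) parts of the differentials of later variables modulo $I$ and the other variables. Concretely, I would show the extension can be arranged so that for every variable $y$ of degree $\ge 2$, $\partial y \otimes 1$ lies in the ideal generated by $x_{r+1},\dots,x_n$ and the higher variables in $A \otimes_R S$. Then setting $\Phi(y)=0$ for all higher $y$ yields a well-defined dg algebra map. I expect this arrangement to be the main obstacle. I would attempt it by an inductive choice of the variables in degree two, where $\partial y$ ranges over relations among the $f_i$ and a coefficient in $I$ is needed precisely because $\bar f_1,\dots,\bar f_r$ are part of a basis of a free summand. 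After that, I would propagate to higher degrees using the André–Iyengar style argument in the semi-free setting, since this is where divided powers would otherwise be needed.

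Given $\Phi$, I pass to homology to get an algebra map $H(\Phi): \Tor^R(S,S) \to \bigwedge F$. The composite $\bigwedge F \to \Tor^R(S,S) \to \bigwedge F$ sends $e_i \mapsto [x_i] \mapsto e_i$, and it is an algebra map, so it is the identity since $\bigwedge F$ is generated by the $e_i$. This gives the splitting as algebras. The only further check is that $\Phi$ is $S$-linear and respects sign conventions for odd elements, including $x_i^2=0$. This holds in $A$ when using exterior variables in degree one, and it is routine.
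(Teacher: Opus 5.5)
\textbf{There is a genuine gap: you never prove that the retraction exists in degrees $\ge 3$.} Your outer framework matches the paper's. You want a dg algebra retraction $S\otimes_R A\to\bigwedge F$ (zero differential) of the inclusion of $S[X_1']$; you then pass to homology and check the composite on the $e_i$. The degree-one setup and the final step are fine. But the existence of the retraction in degrees $\ge 3$ is the whole content of the theorem, and you leave it as ``the main obstacle.''

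The tool you propose cannot fill this gap, for two reasons.
\begin{enumerate}
\item In the paper, centrality in $\pi^*(\varphi)$ (Corollary~\ref{centralrecovery}) is \emph{deduced} from the decomposition $S[X]\cong W[z_1,\dots,z_n]$ that gives the splitting. Citing it here is circular.
\item Centrality is far too weak. It only says that, after reducing to $k$, the $x_i$ do not occur in the \emph{quadratic} part of the differential, i.e.\ certain coefficients lie in $\mathfrak m$. For $\Phi(y)=0$ to be compatible with $\partial$, you need more: for every variable $y$ of degree $d\ge 3$, the coefficient in $\partial y$ of each monomial $x_V$ with $V\subseteq\{1,\dots,r\}$ and $|V|=d-1$ must lie in $I$. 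That is a condition modulo $I$, on terms of every polynomial degree. Your parenthetical gloss of centrality is really this stronger statement.
\end{enumerate}
Nor can this be arranged by adjusting $y$ itself. Already for $d=3$, adding to $y$ any element of $R[X_{\le 2}]_3$ changes the coefficients of $x_ix_k$ with $i,k\le r$ only by elements of $I$. You have to go back and correct the degree-two variables, e.g.\ $x_{2,j}\mapsto x_{2,j}-\sum C_{j,(1|V^2)}x_{V^2}$ as in Section~\ref{sec:explicit-maps}. Each further degree then needs a new and increasingly unwieldy membership-in-$I$ computation. Your ``inductive choice in degree two'' plus a gesture at André--Iyengar gives no mechanism for doing this in all degrees.

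\textbf{The missing idea is to replace maps into $\bigwedge F$ by derivations of the acyclic algebra $A$ itself.} The target $\bigwedge F$ has no acyclicity to absorb obstructions; $A$ does.
\begin{enumerate}
\item Using Lemma~\ref{Extension Lemma}, build chain derivations $\theta_i$ of degree $-1$ on $R[X]$ with $\theta_i(x_j)=\delta_{ij}$ for $x_j\in X_1$, one variable at a time. The obstruction to extending over a variable $y$ is $\theta_i(\partial y)$.
\item For $|y|=2$, this obstruction is an element of $R$. It lies in $I=\partial(R[X_1]_1)$ exactly by \eqref{Freebasis cond}.
\item For $|y|\ge 3$, it is a cycle of degree $|y|-2\ge 1$ in $R[X_{<|y|}]$. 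That algebra is acyclic in this degree, so the obstruction is automatically a boundary.
\end{enumerate}
So the free-summand hypothesis is used exactly once, and acyclicity handles every higher degree (Lemma~\ref{extn lemma2}).

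Over $S$ the $x_i$ become special cycles. André's splitting then gives $S[X]\cong W[z_1,\dots,z_n]$ with $W=\bigcap_i\ker\theta_i$, and composing with the augmentation $W\to S$ yields the retraction $\Psi$. This also proves the arrangement you wanted: replace each higher variable by a lift of its image under the projection $S[X]\to W$ that kills the $z_i$. For these new variables, setting $\Phi(y)=0$ is compatible with the differential.
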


In particular, if $I/I^2$ is $S$-free, we see that the exterior algebra of the conormal module $\bigwedge I/I^2$ splits completely from $\Tor_{*}^{R}(S,S)$ as algebras. 

Another major goal of this paper is to give an explicit splitting morphism from the Tor algebra to the exterior algebra of the conormal module. However, the natural setting for the splitting morphism uses polynomial variables in even degrees, as can be seen from the explicit splitting in low degrees developed in \cref{sec:explicit-maps}. This inspires us to rework results of André and Iyengar, whose settings involve semi-free $\Gamma$-extensions, which use divided power variables in even degrees, and instead, change to semi-free extensions. Further, in this setting we are able to prove that free summands of the conormal module correspond to central elements of the relative homotopy Lie algebra $\pi^*(\varphi)$, where $\varphi: R \twoheadrightarrow S$, see \cref{centralrecovery}.

We now give an outline of the sections. In Section \ref{sec:background} we recall some key concepts related to differential graded (dg) algebras and semi-free ($\Gamma$-)extensions. In Section \ref{sec:minimal-model}, we reexamine results of André in \cite{André1982} and Iyengar in \cite{Iyengar2001} and convert them to the setting of semi-free algebras/minimal models rather than semi-free $\Gamma$-algebras/acyclic closures. In Section \ref{sec:existence} we prove the existence of a splitting morphism on the level of dg algebras which induces the splitting morphism in \cref{maintheoremSec3}, and provide conclusions for the relative homotopy Lie algebra, $\pi^*(\varphi)$. In an attempt to paint a broader picture, we also provide examples where $\bigwedge I/I^2$ has no natural injection into $\Tor_{*}^{R}(S,S)$ when $I/I^2$ is not $S$-free. Finally, in the more natural setting of semi-free extensions, we are able to construct an explicit splitting morphism of dg algebras in low degrees in Section \ref{sec:explicit-maps} and describe an algorithm that one may use for higher degrees.

\section{Background}
\label{sec:background}

The maps in this paper are defined using differential graded algebras as well as semi-free extensions/minimal models. We begin with definitions of the specific machinery needed.

\begin{defn}
A \textit{differential graded (dg) algebra} over a ring $R$ is a complex $(A,\partial^A)$ of free $R$-modules equipped with a unitary, associative multiplication $A \otimes_R A \to A$ satisfying 
\begin{itemize}
    \item[(i)] $A_iA_j \subseteq A_{i+j}$,
    \item[(ii)] $a_ia_j=(-1)^{ij}a_ja_i$,
    \item[(iii)] $a_i^2=0 \text{ if $i$ is odd}$,
    \item[(iv)] $\partial^A(a_ia_j)=\partial^A(a_i)a_j+(-1)^{i}a_i\partial^A(a_j)$,
\end{itemize}
where $a_\ell \in A_\ell$.  Conditions (i)-(iii) are together called \textit{graded commutativity}, and condition (iv) is called the \textit{Leibniz rule}. We also require $A_0$ be a Noetherian $R$-algebra and each $A_i$ to be a finitely generated $A_0$-module. A dg algebra is called a \emph{dg $\Gamma$-algebra} if it admits a system of divided powers. For the interested reader, see (\cite{AvramovIFR}) for the definition of divided power variables. 
\end{defn}

While the results in this paper uses semi-free extensions over semi-free $\Gamma$-extensions, we will refer to older results that use this setting.

\begin{defn}[\cite{tate1957homology}, see also \cite{AvramovIFR}]
\label{def-dga}
    A \emph{semi-free $\Gamma$-extension} $A\langle X\rangle$ of a dg algebra $A$ is a dg algebra obtained by successive adjunction of sets of variables $X=X_0,X_1, \ldots$, where $X_i=\{x \in X \mid |x|=i\}$,  which have exterior variables in odd degrees, divided power variables in positive even degrees, and polynomial variables in degree 0.

     We write $A\langle X_{\leq i} \rangle$ for the dg subalgebra of $A\langle X\rangle$ that includes the adjunction of variables up to degree $i$.

    \end{defn}

        Note that since a semi-free $\Gamma$-extension is defined to be a dg algebra, one automatically has $\partial(X_i)$ are cycles in $A\langle X_0, \ldots, X_{i-1}\rangle$. 

\begin{defn}

    If $S=R/I$ then a \textit{semi-free $\Gamma$-resolution} $R\langle X \rangle $ of $S$ is a semi-free $\Gamma$-extension where the images of $\partial(X_i)$ generate $H_{i-1}\left(R\langle X_1, \ldots, X_{i-1}\rangle \right)$ for  $i> 1$ and the images of $\partial(X_1)$ generate the kernel of the augmentation map $R \to S$. Equivalently, $R\langle X \rangle  \to S$ is a quasi-isomorphism. If, in addition, $X$ is chosen minimally, we call $R\langle X \rangle$ the \emph{acyclic closure} of $R$. Note that we use ``the" instead of ``an", as any two acyclic closures of $R$ are isomorphic. 
\end{defn}

Since the map $\varphi: R \to S$ is surjective, we do not need to adjoin variables of degree $0$ when constructing the acyclic closure of $S$ over $R$.

\begin{defn}
       A \textit{semi-free extension} of a dg algebra $A$ is a dg algebra $A[X]$ obtained by repeated adjunction of sets of free variables $X= X_0, X_1, \ldots$, of exterior type in odd degree and polynomial type in even degree and we write $A[ X_{\leq i} ]$ for the dg subalgebra of $A[ X]$ that includes the adjunction of variables up to degree $i$.
    
\end{defn}

\begin{defn}
    A \textit{minimal} semi-free extension of $R$ is a semi-free extension $R \hookrightarrow{} R[Y]$ such that $Y=Y_{\geq1}$, and the differential $\partial$ is decomposable in the sense that \[  \partial(Y) \subseteq 
    (Y,m_R)^2.\]
If $S=R/I$ with $I \subseteq m_R^2$, then a \textit{minimal model} of $S$ over $R$ is a quasi-isomorphism $R[Y] \to S$, where $R[Y]$ is a minimal semi-free extension of $R$. 
\end{defn}

For convenience of the reader, we end this section by recalling the definition of the Koszul complex as an exterior algebra.

\begin{defn}
    Let $R$ be a ring and $M$ a free $R$-module minimally generated by $e_1, \ldots, e_n$ with the $R$-module map $\varphi: M \to R$. The \textit{Koszul complex} is 
    \[
    K(\varphi): 0 \xrightarrow{} \bigwedge ^n M \xrightarrow{\partial_n} \bigwedge^{n-1}M \xrightarrow{} \cdots \xrightarrow{} \bigwedge^2M \xrightarrow{} M \xrightarrow{\varphi} R \xrightarrow{} 0,
    \]
    with differential given by $$\partial_m(e_{i_1}\wedge \ldots \wedge e_{i_m}) = \sum_{j} (-1)^{j+1} \varphi(e_{i_j})e_{i_1}\wedge \cdots \wedge \hat{e}_{i_j}\wedge \cdots \wedge e_{i_m}.$$ We can realize the Koszul complex as the exterior algebra
    \[K(\varphi) = \bigwedge M = \bigoplus_j M_j = R\langle e_1, \ldots, e_n \rangle,\] which is a dg algebra.
\end{defn}

\section{ Reformulation in Terms of the Minimal Model}
\label{sec:minimal-model}

In this section, we show that results analogous to those of Andr\'e and Iyengar hold in the setting of semi-free extensions rather than semi-free $\Gamma$-extensions. We do this since using polynomial variables instead of divided power variables allows for a more natural setting, as for example, when defining the explicit splitting morphism in Section \ref{sec:explicit-maps}.

We now establish the setting for the rest of the paper. Let $R$ be a Noetherian local ring and assume $I$ is an ideal of $R$ minimally generated by $a_1,\ldots,a_n, b_1, \ldots, b_m$ such that the classes of $a_1,\ldots, a_n$ form a basis for a free summand $F$ of $I/I^2$ as an $R/I$-module and the classes of $b_1,\ldots, b_m$ are a minimal generating set for a complement to $F$. Equivalently, the generating set satisfies condition $1.3.2$ in \cite{Iyengar2001}:
for any $r_1,\ldots, r_n$ in $R$ 
\begin{equation}
\label{Freebasis cond}
\sum_{i=1}^n r_ia_i \in (b_1,\ldots,b_m) \implies r_i \in I \quad \text{for} \quad 1 \leq i\leq n.
\end{equation} In this case, there is always a natural map $\bigwedge F \to \Tor^R_{*}(R/I,R/I)$, which we recall below.

We first define $\bigwedge F$ and $\Tor_{*}^{R}(R/I,R/I)$ as the homology of dg algebras. 
Let $R[ X ]$ be the minimal model of $S=R/I$ over $R$. Note that one can choose a set of variables of degree one $X_1$ such that there is a subset $X'_1=\{x_1,\ldots, x_n\}$ of $X_1$ where $\partial(x_i)=a_i$ for each $i \in \{1, \ldots, n
\}$. We can use the minimal model to define $\Tor_{i}^{R}(S,S)$, indeed for all $i \geq 0$,
\[
\Tor_{i}^{R}(S,S) 
= H_i(R[ X ]\otimes_{R} S)
\cong H_i(S[ X ]).
\]
 Since $R[ X_1' ] = R\langle X'_1\rangle$ is the Koszul complex on $a_1,\ldots,a_n$, the differential on $S[ X_1' ]$ is zero. The conormal module $I/I^2$ has a free summand $F$ with basis $\overline{a_1}, \ldots, \overline{a_n}$, and so we can identify the following modules via the isomorphism that sends $x_i$ to $a_i$:
\[
H(S[ X_1' ] )
= S[ X_1' ]
\cong \bigwedge F.
\]
The natural inclusion map $j: S[X_1'] \to S[ X ]$ gives the following map on homology algebras :
\[
H(j) \colon  \bigwedge F \to \Tor^{R}_{*}(S,S).
\]

We want to show $j$ is a retraction of algebras, which in turn shows $H(j)$ is split. To do this, we obtain a dg algebra map $\Psi \colon S[ X] \to S[ X_1' ]$ which uses the following result, remodeled from \cite{Iyengar2001}. Before stating the proposition, recall that the free rank of a finitely generated module $M$ over a ring $S$ is the maximal possible rank of a free summand and is denoted f-rank$_{S}(M)$.


\begin{prop}
\label{prop-minimal}
    Let $I$ be a proper ideal of a ring $R$, and let $R[X]$ be the minimal model the map $\phi: R\to S=R/I$. Let $n = \text{f-rank}_R(I/I^2)$. If $A$ is a dg algebra over $R$ such that $IA_0\subset \partial(A_1)$, then $$A[ X ] = A \otimes_{R} R[ X ] \cong W[ z_1, \ldots, z_n | \partial(z_i)=0],$$
    where $W =  \bigcap_{x \in X_1'}\text{ker} \left(\theta_x\right)$ is a DG subalgebra of $A[ X ]$.
\end{prop}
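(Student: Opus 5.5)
The plan is to adapt Iyengar's argument from the $\Gamma$-setting to the polynomial setting. It has three parts: build degree $-1$ derivations $\theta_x$ on $R[X]$ for $x\in X_1'$, extend them to $A[X]$, and use them to split off exterior variables $z_i$ one at a time.

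\textbf{Step 1: the derivations.} For each $x=x_i\in X_1'$ I will construct an $R$-linear derivation $\theta_x\colon R[X]\to R[X]$ of degree $-1$ with the following properties:
\begin{itemize}
\item $\theta_x(x_j)=\delta_{ij}$ for $x_j\in X_1'$, and $\theta_x$ vanishes on $X_1\setminus X_1'$;
\item $\theta_x\partial+\partial\theta_x=0$;
\item $\theta_x^2=0$ and $\theta_x\theta_{x'}=-\theta_{x'}\theta_x$.
\end{itemize}
A degree $-1$ derivation on a semi-free extension is determined by its values on the variables, and polynomial variables in even degree cause no divided-power issues. I will therefore define $\theta_x$ on $X_{\le k}$ inductively.

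Given $\theta_x$ on $R[X_{<k}]$ anticommuting with $\partial$, and $y\in X_k$, the element $\theta_x(\partial y)$ is a cycle of degree $k-2$ in $R[X_{<k}]$. I need it to be a boundary $\partial w$; then I set $\theta_x(y)=-w$. Since $R[X_{<k}]\to R[X]$ is an isomorphism on homology below degree $k-1$, this is automatic for $k\ge 3$.

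The delicate case is $k=2$. Write $\partial y=\sum_j r_jx_j+\sum s_\ell x'_\ell$, where $\partial x'_\ell=b_\ell$. Then $\sum_j r_ja_j\in(b_1,\dots,b_m)$, so condition \eqref{Freebasis cond} forces $r_j\in I=\partial(R[X]_1)$. Hence $\theta_x(\partial y)=r_i$ is a boundary, and I set $\theta_x(y)=-w$ with $\partial w=r_i$.

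The identities $\theta_x^2=0$ and graded anticommutativity hold only up to the choices of $w$. I would handle this as Iyengar does: $\theta_x^2$ and $[\theta_x,\theta_{x'}]$ are derivations of degree $-2$ commuting with $\partial$ and vanishing on $X_{\le 1}$, and I would correct the choices at each inductive step so that they vanish on $X_k$. This uses minimality and the same homology argument. I expect this bookkeeping, especially in degree $2$ with polynomial rather than divided-power variables, to be the main obstacle.

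\textbf{Step 2: extend to $A[X]$ and adjust the variables.} Extend $\theta_x$ to $A[X]=A\otimes_R R[X]$ as $1\otimes\theta_x$, with the Koszul sign; it vanishes on $A$ and still anticommutes with $\partial$. By hypothesis $IA_0\subset\partial(A_1)$, so we may choose $u_i\in A_1$ with $\partial u_i=a_i$. Put $z_i=x_i-u_i$. Then $\partial z_i=0$ and $\theta_{x_j}(z_i)=\delta_{ij}$.

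\textbf{Step 3: split off one exterior variable at a time.} Let $W_k=\bigcap_{j\le k}\ker\theta_{x_j}$. Each $\ker\theta_{x_j}$ is closed under products (Leibniz rule) and under $\partial$ (anticommutation), so $W_k$ is a dg subalgebra. By induction I show $W_{k-1}=W_k\oplus z_kW_k$ as dg algebras, i.e. $W_{k-1}=W_k[z_k\mid \partial z_k=0]$.

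For $a\in W_{k-1}$ write $a=(a-z_k\theta_k(a))+z_k\theta_k(a)$, where $\theta_k=\theta_{x_k}$. Using $\theta_k^2=0$, the first summand lies in $\ker\theta_k$, and $\theta_k(a)\in\ker\theta_k$. Because $\theta_k$ anticommutes with $\theta_j$ and $\theta_j(z_k)=0$ for $j<k$, both summands stay in $W_{k-1}$, hence lie in $W_k$. For uniqueness, if $w+z_kw'=0$ with $w,w'\in W_k$, applying $\theta_k$ gives $w'=0$, and then $w=0$. The relation $z_k^2=0$ holds because $z_k$ has odd degree.

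Since $W_0=A[X]$ and $W_n=W$, iterating gives $A[X]\cong W[z_1,\dots,z_n\mid\partial z_i=0]$, which is the claimed decomposition.
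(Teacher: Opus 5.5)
Your overall route is the paper's. Step 1 is Lemma~\ref{extn lemma2}: condition \eqref{Freebasis cond} handles degree $2$, and acyclicity of $R[X_{<k}]$ below degree $k-1$ handles higher degrees. Steps 2--3 are the Andr\'e-type splitting of Proposition~\ref{prop6 ext}, applied to the cycles $z_i=x_i-u_i$. Splitting off one variable at a time is more careful than the paper's one-shot formula for $\sigma$.

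The gap is the pair of identities $\theta_x^2=0$ and $\theta_x\theta_{x'}=-\theta_{x'}\theta_x$. Step 3 depends on both, and the paper only asserts that square-zero derivations exist. Your plan to get them by re-choosing $\theta_x$ on $X_k$ inside $R[X]$ already fails for $k=2$.

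For $y\in X_2$, the value $\theta_x(y)\in RX_1$ is determined only up to a cycle of $R\langle X_1\rangle$, and by \eqref{Freebasis cond} $\theta_x$ maps such cycles into $I$. So $\theta_x^2(y)$ is fixed modulo $I$: it is minus the $\overline{a_i}$-coordinate, in $I/I^2$, of the $x_i$-coefficient of $\partial y$. This can be a unit. Take $R=k[[t,u,v]]/(t^4-u^2v^4)$, $I=(t^2,u^2)$, $a=t^2$, $b=u^2$. Condition \eqref{Freebasis cond} holds because $(u^2,t^4):t^2=(u^2,t^2)$ in $k[[t,u,v]]$. Since the syzygies of $(t^2,u^2)$ are generated by $(u^2,-t^2)$ and $(t^2,-v^4)$, you may take $y\in X_2$ with $\partial y=t^2x_a-v^4x_b$. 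Then $\theta_{x_a}^2(y)\equiv -1 \pmod I$ for every admissible choice. Neither minimality nor a homology argument can change this. The anticommutators meet the same obstruction, for example when $a_ia_j\in(b_1,\ldots,b_m)$.

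The repair is to normalize only after Step 2, once $z_i$ is a cycle. Let $\theta$ be a degree $-1$ derivation of $A[X]$ anticommuting with $\partial$, and $z$ a degree-one cycle with $\theta(z)=1$. Since $\theta^2$ is an even derivation commuting with $\partial$, the map $\theta'=\theta-z\theta^2$ is again a degree $-1$ derivation anticommuting with $\partial$, and $\theta'(z)=1$. Expanding with $\theta(zb)=b-z\theta(b)$, $\theta^2(z)=0$ and $z^2=0$ gives $\theta'^2=0$.

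You can also avoid anticommutation entirely by splitting one variable at a time:
\begin{enumerate}
\item With $W_1=\ker\theta_1'$, your Step 3 argument gives $A[X]=W_1[z_1]$. Note $W_1$ contains $z_2,\ldots,z_n$, since $\theta'_1(z_j)=0$ for $j\neq 1$.
\item Restrict $\theta_{x_2}$ to $W_1$ and compose with the dg algebra retraction $W_1[z_1]\to W_1$, $z_1\mapsto 0$. This is a chain derivation of $W_1$ with $z_2\mapsto 1$ and $z_j\mapsto 0$ for $j>2$.
\item Normalize it as above and iterate.
\end{enumerate}
This yields the claimed isomorphism, with $W$ the intersection of the kernels of the modified derivations.
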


We will not provide a detailed proof of this proposition, as it is similar to the proof of Proposition 2.1 given by Iyengar in \cite{Iyengar2001}. However, we show the necessary pieces of Iyengar's proof hold in the setting of semi-free extensions. Namely, we show the extension properties for the derivations hold in the setting of non $\Gamma$-derivations and show previous results of André also hold when using semi-free extensions in place of semi-free $\Gamma$-extensions. We begin with the definition of a derivation.

 \begin{defn}
     An $R$-linear \textit{derivation} of a graded $R$-algebra $A$ is a homogeneous $R$-linear map $\theta: A \to A$ such that
     \[
     \theta(xy) = \theta(x)y+(-1)^{|\theta||x|}x\theta(y)  \hspace{2mm}\text{for} \hspace{2mm} \text{homogeneous} \, \, x,y\in A.
     \]
     It is called a \textit{chain derivation} if it is also a chain map.
    
  \end{defn}

Note, these derivations are not $\Gamma$-derivations. We next look at a version of the extension property from \cite{GulliksenLevin} for these derivations.

  \begin{lem}
  \label{Extension Lemma}
      Let $A$ be a dg $R$-algebra and let $A[X]$ be a semi-free extension with $\partial(x) \in A$ for all $x \in X$. 
      A chain derivation $\theta$ on $A$  extends to a chain derivation $\widetilde{\theta}$ on $A[X]$ with $\widetilde{\theta}(X) \subseteq A$ if and only if $\theta(\partial(x))\in \partial(A)$ for each $x \in X$. If for each $x$ one has $\theta(\partial (x))= \partial(b_x)$ for some $b_x\in A$, then one may extend $\theta$ to $A[X]$ by $\widetilde{\theta}(x)=(-1)^{|\theta|}b_x$.

      \end{lem}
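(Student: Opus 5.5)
We prove the two implications separately, starting with the easy one. Suppose $\widetilde{\theta}$ is a chain derivation on $A[X]$ extending $\theta$, with $\widetilde{\theta}(X)\subseteq A$. For $x\in X$, the chain map condition $\partial\widetilde{\theta}=(-1)^{|\theta|}\widetilde{\theta}\partial$ applied to $x$ gives
\[
\theta(\partial(x))=\widetilde{\theta}(\partial(x))=(-1)^{|\theta|}\partial(\widetilde{\theta}(x)).
\]
Here $\widetilde{\theta}(\partial(x))=\theta(\partial(x))$ because $\partial(x)\in A$ and $\widetilde{\theta}$ restricts to $\theta$ on $A$. Since $\widetilde{\theta}(x)\in A$, we conclude $\theta(\partial(x))\in\partial(A)$. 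If instead the convention is that $\theta$ commutes with $\partial$ on the nose, the sign disappears and the argument is otherwise unchanged.

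For the converse, fix for each $x\in X$ an element $b_x\in A$ with $\theta(\partial(x))=\partial(b_x)$. The element $b_x$ is homogeneous of degree $|x|+|\theta|$, so the assignment $x\mapsto(-1)^{|\theta|}b_x$ is homogeneous of the right degree. Because $A[X]$ is the free graded-commutative $A$-algebra on $X$ (exterior in odd degrees, polynomial in even degrees), a derivation of $A[X]$ extending $\theta$ is uniquely determined by arbitrary values on $X$. Concretely, on monomials we define
\[
\widetilde{\theta}(a\,x_1\cdots x_k)=\theta(a)\,x_1\cdots x_k+\sum_i \pm\, a\,x_1\cdots\widetilde{\theta}(x_i)\cdots x_k,
\]
with the Koszul signs. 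We must check this is well defined, which amounts to compatibility with graded commutativity and with $x^2=0$ for odd $x$. This is exactly where the non-$\Gamma$ setting is easier than the $\Gamma$ setting: no condition on divided powers $\widetilde{\theta}(x^{(k)})=\widetilde{\theta}(x)x^{(k-1)}$ is needed. For even $x$ the rule $\widetilde{\theta}(x^k)=kx^{k-1}\widetilde{\theta}(x)$ is forced and consistent. For odd $x$ we need $\widetilde{\theta}(x)x+(-1)^{|\theta|}x\widetilde{\theta}(x)=0$, and this follows from graded commutativity, since $|\widetilde{\theta}(x)|$ and $|x|$ have degrees whose parities combine correctly.

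The remaining step is to show that $\widetilde{\theta}$ is a chain map. Define $D=\partial\widetilde{\theta}-(-1)^{|\theta|}\widetilde{\theta}\partial$; the sign convention here must match the one used in the first implication. This $D$ is a graded commutator of a derivation with the differential, so it is again a derivation, of degree $|\theta|-1$. A derivation on $A[X]$ vanishes identically once it vanishes on $A$ and on $X$. On $A$ we have $D=0$ because $\theta$ is a chain derivation. On $x\in X$,
\[
D(x)=(-1)^{|\theta|}\partial(b_x)-(-1)^{|\theta|}\theta(\partial(x))=0,
\]
using $\partial(x)\in A$ and the choice of $b_x$. Hence $D=0$, so $\widetilde{\theta}$ is a chain derivation, and $\widetilde{\theta}(X)\subseteq A$ by construction.

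The main obstacle is bookkeeping of signs rather than any real difficulty: one has to pin down the chain-derivation sign convention and verify that the stated normalization $\widetilde{\theta}(x)=(-1)^{|\theta|}b_x$ makes $D(x)$ vanish. I would settle this first on a single variable and then carry the same convention through the derivation argument above.
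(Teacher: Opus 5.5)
Your proof is correct and follows the paper's argument. The forward direction is the same one-line computation $\theta(\partial(x))=\widetilde{\theta}(\partial(x))=(-1)^{|\theta|}\partial(\widetilde{\theta}(x))$, and for the converse, showing that $D=\partial\widetilde{\theta}-(-1)^{|\theta|}\widetilde{\theta}\partial$ is a derivation vanishing on $A$ and on $X$ is a clean way of carrying out the paper's ``quick computation using the Leibniz rule'' on products of variables.
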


\begin{proof}
First assume $\theta(\partial(x)) \in \partial(A)$ and assign $\widetilde{\theta}$ as in the statement, i.e. $\theta(\partial (x))=
\partial(b_x)$ for $b_x\in A$, so set $\widetilde{\theta}(x)=(-1)^{|\theta|}b_x$. 
Assigning $\widetilde{\theta}(x)$ for each $x \in X$ as in the statement above, we get that $\theta$ automatically extends to a derivation, which we will call $\widetilde{\theta}$, of the semi-free extension $A[X]$, since $A[X]$ is a free graded commutative algebra over $A$. We need to show that given this assignment, this extension is a chain map. Since $\widetilde{\theta}$ is a derivation on $A[X]$, as well as a chain map on $A$, a quick computation using the Leibniz rule will show that $\widetilde{\theta}$ commutes with $\partial$ on products and hence powers of variables.

For the other direction, we assume $\theta$ is a chain derivation on $A$ that extends to a derivation $\widetilde{\theta}$ on $A[X]$ with $\widetilde{\theta}(x)\in A$ for all $x \in X$ and we want to show $\theta(\partial(x))\in \partial(A)$ for each $x\in X$. This follows directly from the fact that $\theta$ is a chain derivation on $A$ since,
\[
\theta(\partial(x)) = \widetilde{\theta}(\partial(x)) = (-1)^{|\theta|}\partial(\widetilde{\theta}(x))
\]
and $\widetilde{\theta}(x) \in A$.
\end{proof}

We now construct the derivations $\theta_x$ needed in the proof of Iyengar's Propositon 2.1. 
 \begin{lem}\label{extn lemma2}
     Let $R[X]$ be the minimal model of the quotient map $R \to R/I$. There exist chain derivations $\theta_x: R[X] \to R[X]$ for each $x \in X_1'$ so that $\theta_x(x)=1$ and $\theta_x(y) = 0$ for all $y$ with $|y|\leq|x|$.
  \end{lem}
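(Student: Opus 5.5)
We build $\theta_x$ degree by degree, starting on $R[X_{\le 1}]$ and then repeatedly applying \cref{Extension Lemma} to pass from $R[X_{\le i-1}]$ to $R[X_{\le i}]$. Fix $x = x_k \in X_1'$, so $\partial(x)=a_k$. The derivation $\theta_x$ has degree $-1$. On $R$ it is zero, and on $X_1$ we set $\theta_x(x_k)=1$ and $\theta_x(y)=0$ for every other $y\in X_1$.

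Since $R[X_1]$ is free graded commutative over $R$, this assignment determines a derivation. It is a chain map on generators: for $y\in X_1$ we have $\partial(\theta_x(y))=0$, because $\theta_x(y)\in R$ sits in degree $0$. On the other side, $\theta_x(\partial y)=\theta_x(r)=0$ for $r\in R$. The Leibniz rule then propagates this to all of $R[X_1]$, as in the proof of \cref{Extension Lemma}. Equivalently, we can apply \cref{Extension Lemma} with $A=R$ and $\theta=0$ to the extension $R\subset R[X_1]$, taking $b_y=\pm\delta_{y,x}$; there the condition $\theta(\partial y)\in\partial(A)$ is trivially $0=\partial(\pm 1)$ since $\partial$ vanishes on $R$.

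Now suppose $\theta_x$ has been constructed as a chain derivation on $A=R[X_{\le i-1}]$ for some $i\ge 2$. Consider the extension $R[X_{\le i}]=A[X_i]$, where $\partial(X_i)\subseteq A$. By \cref{Extension Lemma}, $\theta_x$ extends with $\theta_x(X_i)\subseteq A$ exactly when $\theta_x(\partial y)\in\partial(A)$ for each $y\in X_i$. Here $\partial y$ is a cycle in $A$, and $\theta_x(\partial y)$ is a cycle of degree $i-2$ because $\theta_x$ is a chain map.

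When $i\ge 3$, this cycle has degree $\ge 1$. Since $R[X]$ is a minimal model, $H_j(R[X_{\le i-1}])=H_j(R[X])=0$ for $1\le j\le i-2$: adjoining variables of degree $\ge i$ does not change homology in degrees below $i-1$. So the cycle is a boundary, and the extension exists.

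The main obstacle is $i=2$, where $\theta_x(\partial y)\in R$ is a degree-$0$ element and $\partial(A_1)=I$. We must show $\theta_x(\partial y)\in I$, and this is where the free summand hypothesis \eqref{Freebasis cond} is used. Write
\[
\partial y=\sum_j r_j x_j+\sum_\ell s_\ell x'_\ell,
\]
where the $x'_\ell$ are the degree-one variables with $\partial x'_\ell=b_\ell$. Then $\theta_x(\partial y)=\pm r_k$. Because $\partial y$ is a cycle, $\sum_j r_j a_j=-\sum_\ell s_\ell b_\ell\in(b_1,\dots,b_m)$, and \eqref{Freebasis cond} gives $r_k\in I=\partial(A_1)$. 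So the condition of \cref{Extension Lemma} holds at $i=2$ as well.

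Passing to the union over all $i$, we obtain a chain derivation $\theta_x$ on $R[X]$ with $\theta_x(x)=1$. It vanishes on all other degree-one variables, and degree-one variables are the only variables of degree $\le |x|$, so this is the required vanishing condition.
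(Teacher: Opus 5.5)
Your proposal is correct and follows essentially the same route as the paper. You induct on degree using \cref{Extension Lemma}, use the free-summand condition \eqref{Freebasis cond} to get $\theta_x(\partial X_2)\subseteq I=\partial(RX_1)$, and in higher degrees use that $\theta_x(\partial y)$ is a positive-degree cycle in a truncation $R[X_{\le i-1}]$ that is acyclic in that degree. Your justification that $\theta_x(\partial y)$ is a cycle, because $\partial y$ is a cycle and $\theta_x$ is already a chain map on $A$, is slightly cleaner than the paper's, which writes $\partial(\theta(X_{i+1}))$ before $\theta$ is defined on $X_{i+1}$.
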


\begin{proof}
   Let $x \in X_1'$, we build $\theta_{x}$ by induction on homological degree $i$. For $i =1$, $R[X_1]=R\langle X_1 \rangle$, so we have the existence of the standard chain derivations (of degree $-1$) $\theta^1_{x_j}: R[X_1] \to R[X_1]$ for any $x_j \in X_1$ where $\theta_{x_j}(x_k)=\delta_{jk}$ (see \cite{AvramovIFR}); we will consider only the derivations $\theta^1_{x}$, where $x \in X'_1 \subseteq X_1$.

  Suppose $\theta_{x}^{i}: R[X_{\leq i}] \to R[X_{\leq i}]$ is an $R$-linear chain derivation. We want to show $\theta^{i}_{x}(\partial X_{i+1}) \subseteq \partial R[X_{\leq i}]$, so that $\theta^{i}_{x}$ extends to \[\theta^{i+1}_{x}: R[X_{\leq i+1}] \to R[X_{\leq i+1}],\] using Lemma \ref{Extension Lemma} with $A= R[X_{\leq i}]$. Once we prove we can extend, we then define $\theta_{x} := \text{colim}_{i} \theta_{x}^{i}$ and finish the proof.

  If $i=1$, then for $y \in X_2$,
  \[
  0= \partial^2(y) = \sum_{x_j\in X_1'} r_jx_j + \sum_{x_k \notin X_1'} r_kx_k = \sum_j r_ja_j + \sum_k r_kb_k.
  \]
  This implies $r_j \in I$ for each $j \in \{1, \ldots, n\}$ by \ref{Freebasis cond}, and since $\theta_x^1(x_i)=1$ if $x_i \in X_1'$ and $0$ otherwise, we have $\theta_{x}^{1}(\partial X_2) \subseteq I \subseteq \partial R[X_1]$.
  
  If $i >1 $, then we know the degree of $\theta_{x}^{i}(\partial X_{i+1})$ is $ i-1$. We also know that \[\theta^{i}_{x}(\partial X_{i+1}) = \partial\left(\theta^{i}_{x}( X_{i+1}) \right) \subseteq \text{ker}(\partial)\]
  and that $R[X_{\leq i }]$ is acyclic in degree $d$ for $0< d \leq i-1$. Thus, $\theta^{i}_{x}(\partial X_{i+1}) \subseteq \partial R[X_{\leq i}]$, and we are done.
\end{proof}

 The last ingredient needed for the proof of \cref{prop-minimal} is a splitting result from André in \cite{André1982}. We begin with the definition of a special cycle. The reader should note the definition is slightly different from the one found in \cite{André1982}, since we are not assuming the derivation is a $\Gamma$-derivation.

 \begin{defn}
     Let $A$ be a dg algebra, respectively a dg $\Gamma$-algebra, and $s$ a cycle of $A$. We say $s$ is a \textit{special cycle} if there exists a chain derivation, respectively a chain $\Gamma$-derivation, $\theta:A\to A$ so that $\theta(s)=1$.
 \end{defn}

 The next result was first noticed by André for a single special cycle (in the setting of $\Gamma-$derivations) and mentioned briefly in \cite{Iyengar2001}. Since a proof for dg $\Gamma$-algebras is not given in \cite{Iyengar2001} we provide the proof for dg $\Gamma$-algebras for the ease of the reader.

\begin{prop}
\label{prop6 ext}
    Let $s_1,\ldots, s_n$ be special cycles of odd degree of the dg $\Gamma$-algebra $A$. Then there exists a dg $\Gamma$-algebra $W$ giving rise to an isomorphism of dg $\Gamma$-algebras $$W\langle z_1, \ldots, z_n \mid \partial(z_i)=0 \rangle \cong A$$ which sends $z_i$ to $s_i$ for  $i \in [n]$.
\end{prop}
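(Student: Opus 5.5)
We argue by induction on $n$. The whole argument rests on the case of a single special cycle, where we build $W$ explicitly as the kernel of the derivation. So suppose $s$ is a special cycle of odd degree $d$ and $\theta$ is a chain $\Gamma$-derivation with $\theta(s)=1$. Then $\theta$ has degree $-d$, which is odd. Set $W=\ker\theta$.

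\textbf{$W$ is a dg $\Gamma$-subalgebra.} Since $\theta$ is a chain map, $W$ is closed under $\partial$. Since $\theta$ is a derivation, $W$ is closed under products. Since $\theta$ is a $\Gamma$-derivation, $\theta(w^{(k)})=w^{(k-1)}\theta(w)$ for $w$ of positive even degree, so $W$ is closed under divided powers.

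\textbf{Decomposition of $A$.} Every $a\in A$ can be written uniquely as $a=w_0+s w_1$ with $w_0,w_1\in W$. We define
\[
\pi(a)=a-s\,\theta(a).
\]
Because $\theta$ is odd and $s$ is odd, $s^2=0$, and $\theta(\theta(a))=0$ since $\theta^2=\tfrac12[\theta,\theta]$ is again a derivation that kills generators.

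I expect the identity $\theta^2=0$ to be the main obstacle. It is not automatic that $\theta^2=0$; it would hold if $\theta^2$ were a derivation vanishing identically, but we do not know that a priori. To get around this I would first replace $\theta$ by $\theta'=\theta-s\,\theta^2$, or iterate as André does, to obtain an odd $\Gamma$-derivation with $\theta'(s)=1$ and $\theta'^2=0$. As an alternative, I would check directly that $\theta^2$ is an even derivation whose values are controlled. The key computation uses the Leibniz sign $(-1)^{|\theta||x|}$. For odd $\theta$ and odd $s$, it gives $\theta(s\,b)=b-s\,\theta(b)$.

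Once $\theta^2=0$ is in hand, the decomposition follows. We have
\[
\theta(\pi(a))=\theta(a)-\theta(a)+s\,\theta^2(a)=0,
\]
so $\pi(a)\in W$. Also $\theta(a)\in W$, and $a=\pi(a)+s\,\theta(a)$. For uniqueness, if $w_0+sw_1=0$, then applying $\theta$ gives $w_1-s\theta(w_1)=w_1=0$, and hence $w_0=0$.

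\textbf{The isomorphism.} The map $W\langle z\mid \partial z=0\rangle\to A$ given by $z\mapsto s$ is a dg $\Gamma$-algebra map: $s$ is a cycle, and $z$ is exterior since $s^2=0$. By the decomposition above it is bijective.

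\textbf{Induction on $n$.} For several special cycles we need the remaining $s_2,\dots,s_n$ to survive in $W$. First replace each $s_i$ for $i\ge 2$ by
\[
s_i'=s_i-s_1\theta_1(s_i).
\]
Then $s_i'\in W$, it is still a cycle, and the change of variables is triangular, so sending $z_i$ to $s_i'$ or to $s_i$ yields the same kind of isomorphism.

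Next the derivations must be adjusted so they act on $W$. Set $\theta_i'=\theta_i-\dots$, a correction chosen so that $\theta_i'$ preserves $W$ and $\theta_i'(s_i')=1$. This may require first making the $\theta_i$ satisfy $\theta_i(s_j)=\delta_{ij}$ by Gram–Schmidt-type triangular corrections. One candidate is the projection $\pi\theta_i|_W$, which is a chain $\Gamma$-derivation of $W$ provided $\theta_i$ commutes appropriately with $\theta_1$. Ensuring this compatibility is the second technical point I would verify carefully.

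With these adjustments, induction on $n$ applies: $W\cong W'\langle z_2,\dots,z_n\rangle$, and tensoring with $\langle z_1\rangle$ gives the claim.
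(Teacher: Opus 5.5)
\textbf{The gap is in the inductive step.} It cannot be closed as you plan, because the statement as written is false without an independence hypothesis. Take $A=R\langle s\mid \partial s=0\rangle$ with $|s|=1$. Then $s_1=s_2=s$ are both special (via $\partial/\partial s$), yet any map with $z_1,z_2\mapsto s$ kills $z_1-z_2\neq 0$. Your triangular substitution fails exactly here: it gives $s_2'=s-s\,\theta_1(s)=0$. In general, $\pi=\mathrm{id}-s_1\theta_1$ is multiplicative with $\pi(s_1)=0$, so $\pi\theta_i(s_i')=1-\pi\theta_i(s_1)\,\pi\theta_1(s_i)$, which need not be a unit. So no ``Gram--Schmidt'' correction can force $\theta_i(s_j)=\delta_{ij}$; that condition must be added as a hypothesis. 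It does hold in the application, since the derivations of Lemma~\ref{extn lemma2} kill the other degree-one variables. The paper's proof does not address this either. It takes $W=\bigcap_i\ker\theta_i$ with an inverse that is linear in the $z_i$. That tacitly uses $\theta_i^2=0$ and $\theta_i(s_j)=\delta_{ij}$, and as written it is only correct for $n=1$: for $a=s_1s_2$ it produces no $z_1z_2$ term.

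\textbf{With $\theta_i(s_j)=\delta_{ij}$ assumed, your argument goes through.} The compatibility between $\theta_i$ and $\theta_1$ that you flag is not needed.

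First, your repair $\theta'=\theta-s\theta^2$ is correct. Delete the earlier sentence claiming $\theta^2$ kills generators, which is false. Set $D=\theta^2$; it is an even chain $\Gamma$-derivation, because the cross terms $\pm\theta(x)\theta(y)$ cancel and $\theta(x)^2=0$ for $|x|$ even. Hence $sD$ is an odd chain $\Gamma$-derivation and $\theta'(s)=1$. Moreover $\theta'^2=D-(D-s\theta D)-sD\theta+s\,D(s)\,D=0$, because $\theta D=D\theta$ and $D(s)=\theta(1)=0$. Also, $\theta_1'=\theta_1-s_1\theta_1^2$ still kills $s_j$ for $j\ge2$.

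Second, under $A\cong W\langle z_1\rangle$ the map $\pi=\mathrm{id}-s_1\theta_1'$ corresponds to $z_1\mapsto 0$. So it is a dg $\Gamma$-algebra retraction of $A$ onto $W$. Hence $\pi\theta_i|_W$ is a chain $\Gamma$-derivation of $W$ for every $i$, with no commutation required. The $s_j$ with $j\ge 2$ already lie in $W$ and satisfy $\pi\theta_i(s_j)=\delta_{ij}$. The induction therefore closes with no change of variables, giving $A\cong W'\langle z_1,\dots,z_n\rangle$ with $z_i\mapsto s_i$.
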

\begin{proof}
    For each special cycle $s_i$, there exists a derivation $\theta_i$ of $A$ whose square is zero and sends $s_i$ to 1. We set $W = \bigcap_{i=1}^{n} \ker(\theta_i)$, which is a dg subalgebra of $A$. The inclusion of $W$ into $A$ extends to a homomorphism of dg algebras 
    \[
    \tau: W\langle z_1, \ldots, z_n \mid \partial(z_i)=0 \rangle \to A
    \]
    that  maps each $z_i$ to $s_i$. This yields an isomorphism with inverse
    \[ 
    \sigma: A \to W\langle z_1, \ldots, z_n \mid \partial(z_i)=0 \rangle
    \]
    that sends $a$ to the element
    \[
    \sigma(a)= \underbrace{(a + (-1)^{|a|} \sum_{i}\theta_i(a)s_i)}_{\alpha} - (-1)^{|a|} \sum_i \theta_i(a)z_i,
    \]
    where the expression $\alpha$ is in the kernel of each $\theta_i$.
    It is straightforward to check that $\tau$ and $\sigma$ are inverses.
\end{proof}

  We now claim that the analogous version of Proposition \ref{prop6 ext} is equally valid in the setting of dg algebras. When we consider the setting of Proposition \ref{prop-minimal}, we note the special cycles of interest are degree 1, which is necessary to apply Proposition \ref{prop6 ext} as the variables need to be of odd degree. The proof of Proposition 2.1 (seen in \cite{Iyengar2001}) relies on the isomorphism given in Proposition $\ref{prop6 ext}$ and still holds in the setting of general semi-free extensions as long as one checks that the derivations used extend to the entire semi-free extension $R[X]$, which we did above in \cref{extn lemma2}.

\section{Existence of A Splitting Morphism}
\label{sec:existence}
Recall from the previous section and using the notation there, that there is a natural inclusion map $j: S[X_1'] \to S[ X ]$. In this section we will show this inclusion is a retraction of algebras, which in turn shows \[H(j):\bigwedge F \to \Tor_{*}^{R}(S,S)\] is split. We also provide examples where $I/I^2$ does not have a free summand that show the splitting fails when the necessary conditions are not met.

We begin by constructing a dg algebra map $\Psi \colon S[X] \to S[ X_1' ]$. Applying \cref{prop-minimal} by substituting $S$ for $A$, we obtain an isomorphism  
\begin{equation}
\label{alph}
\alpha \colon S[ X ]\xrightarrow[]{\cong} W[ z_1, \ldots, z_n | \partial(z_i)=0]
\end{equation}
for some subalgebra $W$ of $S[ X ]$.

Since $W$ is a subalgebra of $S[ X ] $, we may embed $W$ into $S[ X ]$ with a map $\iota$. We then project $S[ X ]$ down to $S  = H_{0}(S[ X ]))$ via a morphism of dg algebras we will call $\pi$, which gives us the following composition:
\begin{align*}
    \xymatrixcolsep{1.3pc}
    \xymatrix{&W \ar[rr]^{\iota} & & S[ X ] \ar[rr]^{\pi} & & S.\\}
\end{align*}
The composition $\pi \circ \iota$ then induces a map $\psi': W[ z_1, \ldots, z_n] \xrightarrow[]{} S[ z_1, \ldots,z_n  ]$
by adjoining $z_1, \ldots, z_n$ to the algebras with $\partial(z_i)=0$. Further notice that the differential on $S[ X_1' ]$ is $0$, and so there is an isomorphism $\beta: S[ X_1' ] \to S[ z_1, \ldots z_n | \partial(z_i)=0 ]$ that sends each $x_i \in X_1'$ to $z_i$ for $i \in [n]$.

We then define $\psi$ as the composition $\beta^{-1}\circ \psi'\circ \alpha$ as seen in the diagram below: 
\begin{align}
\label{comm-diag}
\xymatrixcolsep{1.3pc}
\xymatrix{& &S[ X ] \ar[rr]^{\alpha \qquad \qquad} \ar@/^1pc/[dd]^{\Psi= \beta^{-1}\circ \psi'\circ \alpha} &   &{W[ z_1, \ldots, z_n | \partial(z_i)=0]}  \ar[dd]^{\psi'}\\
& & & & & \\ 
&  \left( \bigwedge F, \partial =0 \right) \ar@{-}[r]^{\qquad \cong} & S[  X_{1}' ] \ar[rr]^{\beta \qquad \qquad} \ar@/^/[uu]^{j} & & S[ z_1,\ldots,z_n| \partial(z_i)=0 ]\\}
\end{align}

To show $\Psi$ splits $j$, it is enough to show that it does so on $X_1'$ since $S[ X_1']$ is a free graded commutative $S$-algebra.
Indeed for each $x_i \in X_1'$,
\[
(\Psi\circ j) (x_i)= (\beta^{-1}\circ \psi' \circ \alpha) \circ j(x_i)= (\beta^{-1}\circ \psi' \circ \alpha)(x_i)= (\beta^{-1} \circ \psi')(z_i)= \beta^{-1}(z_i)=x_i.
\]
Thus, $\Psi$ splits $j$ and induces a splitting $H(\Psi)\colon \Tor^R_{*}(S,S) \to \bigwedge F$ of $H(j)$. From the above discussion, we see that $\bigwedge F$ splits out of $\Tor^R(S,S)$ as algebras.

We can further expand this by considering a map of rings given by the composition $R \twoheadrightarrow S \to T$ (where $S \to T$ is any map of rings) and tensoring the objects above with $T$ over $R$. Using the same argument, we have the following theorem.

\begin{theorem}
\label{maintheoremSec3}
    Let $R$ be a Noetherian local ring and $I$ be an ideal of $R$ with $F$ a free summand of $I/I^2$ as an $S= R/I$-module. Furthermore, if $S \to T$ is any map of rings then, the natural map from $\bigwedge F \otimes_{S} T$ to $\Tor^{R}(S,T)$ splits as a map of algebras.
\end{theorem}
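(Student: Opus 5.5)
We follow the same route as in the case $T=S$ treated just above, now base-changing along $R\to S\to T$ instead of $R\to S$. Let $R[X]$ be the minimal model of $S$ over $R$, with $X_1'=\{x_1,\ldots,x_n\}\subseteq X_1$ and $\partial(x_i)=a_i$. Since $R[X]$ is a semi-free resolution of $S$ over $R$, we have
\[
\Tor^R_*(S,T)\cong H_*(R[X]\otimes_R T)=H_*(T[X]).
\]
The dg subalgebra $T[X_1']=R\langle X_1'\rangle\otimes_R T$ is the Koszul complex on $a_1,\ldots,a_n$ tensored with $T$. Because each $a_i\in I$ maps to $0$ in $T$, its differential vanishes. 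Hence $H(T[X_1'])=T[X_1']\cong \bigwedge F\otimes_S T$, using that $F$ is $S$-free on $\overline{a_1},\ldots,\overline{a_n}$. The inclusion $j_T\colon T[X_1']\to T[X]$ then induces the natural map $\bigwedge F\otimes_S T\to \Tor^R_*(S,T)$ of the statement.

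To split it, we apply \cref{prop-minimal} with $A=T$, viewed as a dg $R$-algebra concentrated in degree $0$. The hypothesis $IA_0\subseteq\partial(A_1)$ holds trivially because $IT=0$. This gives an isomorphism of dg algebras
\[
\alpha_T\colon T[X]\xrightarrow{\cong} W_T[z_1,\ldots,z_n\mid \partial(z_i)=0],
\]
with $W_T=\bigcap_{x\in X_1'}\ker(\theta_x\otimes T)$. The derivations $\theta_x$ of \cref{extn lemma2} are $R$-linear chain derivations of $R[X]$, so they base change to chain derivations of $T[X]$ with $\theta_x(x)=1$. We must also record that $\alpha_T(x_i)=z_i$. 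This is how the isomorphism in \cref{prop6 ext} is built: it sends $z_i$ to the special cycle $s_i$, and here the special cycles are $s_i=x_i\otimes 1$, which are cycles because $\partial(x_i)=a_i\mapsto 0$ in $T$.

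Next we form $\psi'_T\colon W_T[z_1,\ldots,z_n]\to T[z_1,\ldots,z_n]$ by adjoining the $z_i$ to the composite
\[
W_T\hookrightarrow T[X]\to H_0(T[X]).
\]
Here $H_0(T[X])=S\otimes_R T=T$, since $R\to S$ is surjective. This composite is a map of dg algebras because $T$ sits in degree $0$ and the map kills positive degrees and boundaries in degree $0$. We also let $\beta_T\colon T[X_1']\to T[z_1,\ldots,z_n]$ send $x_i\mapsto z_i$, and define
\[
\Psi_T=\beta_T^{-1}\circ\psi'_T\circ\alpha_T.
\]
Then $\Psi_T\circ j_T$ fixes each $x_i$, so $\Psi_T\circ j_T=\mathrm{id}$ on the free graded-commutative $T$-algebra $T[X_1']$. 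Passing to homology, $H(\Psi_T)$ is a morphism of algebras splitting the natural map $\bigwedge F\otimes_S T\to\Tor^R_*(S,T)$.

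The main obstacle is verifying that \cref{prop-minimal}, proved along the lines of Iyengar's argument via \cref{prop6 ext}, really applies to a general $T$-algebra $A$. The point to check is that the explicit inverse $\sigma(a)=\alpha-(-1)^{|a|}\sum_i\theta_i(a)z_i$ makes sense and is inverse to $\tau$. This requires three properties of the derivations $\theta_i=\theta_{x_i}\otimes T$: they pairwise anticommute, they square to zero, and $\theta_i(x_j)=\delta_{ij}$. Without these, $\alpha$ need not lie in every $\ker\theta_i$. We would obtain them from the construction in \cref{extn lemma2} by choosing the extensions compatibly: at each inductive step we pick the $b_x$ so that the commutators $[\theta_i,\theta_j]$ and the squares $\theta_i^2$ vanish on new variables. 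Failing that, we would replace $\theta_i$ by suitable modifications, as in Iyengar's proof. The dg (non-$\Gamma$) setting is harmless here, because the $s_i$ have odd degree $1$ and so no divided powers are involved.
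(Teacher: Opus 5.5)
Your argument is essentially the paper's: the paper builds $\Psi=\beta^{-1}\circ\psi'\circ\alpha$ from \cref{prop-minimal} for $T=S$ and then says the same argument works after base change along $R\to S\to T$. That is exactly your application of \cref{prop-minimal} with $A=T$ (using $IT=0$), followed by projection to degree $0$ and the check that $\Psi_T\circ j_T$ fixes each $x_i$.

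Your closing ``obstacle'' is not specific to $T$, and you can sidestep it by tensoring the $T=S$ splitting $\Psi$ with $T$ over $S$. Note also that the three properties you list do not make the displayed linear formula for $\sigma$ an inverse of $\tau$ once $n\geq 2$, since that formula can never produce $\sigma(s_1s_2)=z_1z_2$. What those properties actually give you is the ability to iterate the one-cycle splitting $n$ times.
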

Note, in Section \ref{sec:minimal-model} we remodeled some previous results to fit the setting of semi-free extensions, but since those results existed in the setting of semi-free $\Gamma$-extensions we could have proved \cref{maintheoremSec3} in that setting by using Iyengar's Proposition 2.1 in \cite{Iyengar2001} instead of \cref{prop-minimal}. It is also worth noting that the case where $T = k$ is shown in \cite{Kekkou} to obtain a lower
bound on the level of the Koszul complex in the derived category.

If the entire conormal module $I/I^2$ is free, we get the following immediate consequence.

\begin{corollary}
    Let $R$ be a Noetherian local ring, $I$ be an ideal of $R$, and $I/I^2$ be free over $S=R/I$. Then the natural map from $\bigwedge I/I^2$ to $\Tor^{R}_{*}(S,S)$ splits as algebras.
\end{corollary}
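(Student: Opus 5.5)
The Corollary is the special case $F = I/I^2$ and $T = S$ of \cref{maintheoremSec3}, or more directly of the construction of $\Psi$ in diagram \eqref{comm-diag}. The plan has three steps: check the free-summand hypothesis holds with $F=I/I^2$, then reuse $\Psi$ to split $j$ on the nose, and finally pass to homology.

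\textbf{Step 1: set up the hypothesis.} Since $I/I^2$ is $S$-free, I take $F=I/I^2$ itself, which is trivially a free summand with complement zero. Choose a minimal generating set $a_1,\ldots,a_n$ of $I$. Because $R$ is local, the classes $\overline{a_i}$ minimally generate $I/I^2$, and since that module is free of rank $n$ they form a basis. So condition \eqref{Freebasis cond} holds with $m=0$ and no $b_j$'s: it reads $\sum r_ia_i = 0 \Rightarrow r_i\in I$, which is exactly linear independence of the $\overline{a_i}$ in $I/I^2$. In the minimal model $R[X]$ of $S$ we may then take $X_1'=X_1=\{x_1,\ldots,x_n\}$ with $\partial x_i=a_i$. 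This gives the identification $S[X_1']=\bigwedge_S F$ with zero differential, and the natural map in question is $H(j)\colon \bigwedge I/I^2\to \Tor^R_*(S,S)=H(S[X])$.

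\textbf{Step 2: build the retraction.} Apply \cref{prop-minimal} with $A=S$. Its hypothesis $IA_0\subseteq\partial(A_1)$ holds trivially since $IS=0$. This yields $\alpha\colon S[X]\cong W[z_1,\ldots,z_n\mid \partial z_i=0]$ with $\alpha(x_i)=z_i$. Define $\Psi=\beta^{-1}\circ\psi'\circ\alpha$ exactly as in \eqref{comm-diag}, where $\psi'$ is induced by $W\hookrightarrow S[X]\to H_0=S$. Each of $\alpha$, $\psi'$, $\beta$ is a morphism of dg algebras, so $\Psi$ is one too. As computed before \cref{maintheoremSec3}, $\Psi\circ j$ fixes each $x_i$. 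Since $S[X_1']$ is free graded commutative on the $x_i$, this gives $\Psi\circ j=\mathrm{id}$.

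\textbf{Step 3: pass to homology.} Taking homology, $H(\Psi)\circ H(j)=\mathrm{id}_{\bigwedge I/I^2}$, where $H(\Psi)$ is a map of graded $S$-algebras. Hence $H(j)$ is split injective as a map of algebras, which is the claim.

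The only genuine input is the existence of $\alpha$ with $\alpha(x_i)=z_i$. This comes from \cref{prop-minimal} together with the derivations $\theta_{x_i}$ of \cref{extn lemma2}, which in turn need \eqref{Freebasis cond}; that condition is exactly what freeness of $I/I^2$ supplies in Step 1. Nothing beyond these earlier results is required.
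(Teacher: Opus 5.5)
Your proposal is correct and is the paper's argument: the paper states this corollary as an immediate consequence of \cref{maintheoremSec3}, which amounts to taking the construction $\Psi=\beta^{-1}\circ\psi'\circ\alpha$ of \eqref{comm-diag} with $F=I/I^2$ and $T=S$. Your Step 1 adds one verification the paper leaves implicit, namely that by Nakayama a minimal generating set of $I$ satisfies \eqref{Freebasis cond} with no $b_j$'s.
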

While \cref{maintheoremSec3} gives the existence of a splitting map $\psi$, it does not provide an explicit construction. In Section \ref{sec:explicit-maps}, we provide a technique for finding one and show how it works in low degrees. 

In the next corollary we explore some conclusions for the relative homotopy Lie algebra $\pi^*(\varphi)$ that follow from the \cref{prop-minimal}; for the definition, see, for example, 
\cite{AvramovIFR}.
This result appears in \cite{Iyengar2001} for $\pi^*(S)$ using an argument in the setting of semi-free $\Gamma$-extensions, requiring a precise construction of $\Gamma$-derivations to describe the homotopy Lie algebra of the ring $S$, $\pi(S)$. In comparison, we show a similar result (possibly known to experts) for the relative homotopy Lie algebra $\pi^*(\varphi)$ with a shorter proof in the natural setting of semi-free extensions.

\begin{corollary}
\label{centralrecovery}
    Let $R$ be a Noetherian local ring and $I$ be an ideal of $R$ with $F$ a free summand of $I/I^2$ as an $S= R/I$-module. The generators of $F$ correspond to central elements in  $\pi^{*}(\varphi)$, the homotopy Lie algebra of the ring map $\varphi: R \twoheadrightarrow S$.
\end{corollary}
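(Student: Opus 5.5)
We identify $\pi^*(\varphi)$ with the dual of the space of indecomposables of the minimal model. Let $R[X]\to S$ be the minimal model from \cref{sec:minimal-model} and $k$ the residue field. Then $\pi^{i}(\varphi)$ is identified with $\operatorname{Hom}_k(kX_{i-1},k)$ (up to the usual degree shift), i.e.\ with the dual of $\operatorname{ind}(k\otimes_R R[X])$. The bracket is dual to the quadratic part of the differential: writing $\partial(y)\equiv \sum c_{uv}\,uv$ modulo $\mathfrak m_R$-terms and $(X)^3$, the bracket $[\xi,\eta]$ evaluated on $y$ is, up to sign, $\sum c_{uv}(\xi(u)\eta(v)\pm\eta(u)\xi(v))$. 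Concretely, $[u^*,\eta]=0$ for all $\eta$ holds precisely when no variable $y\in X$ has a differential whose quadratic part in the variables involves $u$ with a nonzero coefficient modulo $\mathfrak m$. For each $x_i\in X_1'$, corresponding to the basis element $\overline{a_i}$ of $F$, we take the dual element $x_i^*\in\pi^2(\varphi)$. The claim to prove is that every $x_i^*$ is central.

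To do this, we use \cref{prop-minimal} with $A=R$ (or $A=k$ after base change) in the form given by the derivations $\theta_{x_i}$ of \cref{extn lemma2}. After base change to $k$, these derivations become chain derivations $\bar\theta_i$ of $k\otimes_R R[X]$. Each $\bar\theta_i$ satisfies $\bar\theta_i(x_j)=\delta_{ij}$ and kills all other variables of degree $\le 1$. The key step is to replace the variables of $X$ by ones lying in $W=\bigcap\ker\theta_{x}$. By \cref{prop-minimal} (via the formula for $\sigma$ in the proof of \cref{prop6 ext}), $R[X]\cong W[z_1,\dots,z_n\mid\partial z_i=0]$. Since $W$ is a dg subalgebra, we then choose a new minimal generating set of variables $\{x_1',\dots,x_n'\}\cup Y$. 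Here each $y\in Y$ is obtained by applying the projection $a\mapsto a+(-1)^{|a|}\sum\theta_i(a)x_i$ to an old variable, so $y$ lies in $W$. This is a triangular change of variables: it alters a variable only by decomposable terms and multiples of $x_i$. Hence it remains a minimal model and induces the identity on indecomposables modulo decomposables. It therefore gives the same Lie algebra $\pi^*(\varphi)$, and the $x_i^*$ are the same functionals.

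In these coordinates, $\partial(x_i)=a_i$ has no quadratic part, and for each $y\in Y$ we have $\partial(y)\in W$ because $W$ is closed under $\partial$. An element of $W$ whose expansion involved a monomial containing $x_i$ would have nonzero $\theta_i$-image. Indeed, on $W[z]$-normal form $\theta_i$ acts as $\partial/\partial z_i$, which detects any occurrence of $z_i$. So $\partial(y)$ involves no $x_i$ at all, and in particular its quadratic part contains no $x_i u$ terms. By the dual description of the bracket, $[x_i^*,\eta]=0$ for every $\eta\in\pi^*(\varphi)$, so $x_i^*$ is central.

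The main obstacle is the bookkeeping in the middle step. We must verify that the projected generators $y$ together with $x_1,\dots,x_n$ still form a minimal set of free generators of the semi-free extension. We must also verify that $\theta_i$ literally acts as $\partial/\partial x_i$ in the new basis, so that membership in $W$ forces the absence of $x_i$. The first verification follows from the triangularity of the substitution in degree-filtration order. For the second, I would use the explicit isomorphism $\sigma$ from the proof of \cref{prop6 ext}, together with the fact that $\theta_i^2=0$ and $\theta_i\theta_j=-\theta_j\theta_i$, which can be arranged degree by degree as in \cref{extn lemma2}.
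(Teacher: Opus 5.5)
Your argument is essentially the paper's: you split off the $x_i\in X_1'$ as cycle variables via \cref{prop-minimal}, take the remaining generators inside the dg subalgebra $W$ so that no differential produces an $x_i$, and read off centrality of $x_i^*$ from the quadratic part of the differential on $k\otimes_R R[X]$; your triangular change of variables, and the check that it is the identity on indecomposables, spell out what the paper only asserts. Two local repairs are needed: first, the splitting must be done over $S$ or $k$, not over $R$, because for $A=R$ the hypothesis $IA_0\subseteq\partial(A_1)$ fails and $\partial x_i=a_i\neq 0$, so $R[X]\cong W[z_1,\dots,z_n\mid\partial z_i=0]$ is false (compare $H_1$); second, for $n\ge 2$ the one-step formula $a\mapsto a+(-1)^{|a|}\sum_i\theta_i(a)x_i$ (also used in the paper) need not land in $W$ even for square-zero anticommuting $\theta_i$, since applying $\theta_j$ to it gives $(-1)^{|a|}\sum_{i\neq j}\theta_j\theta_i(a)x_i$, so you should instead compose the single-variable projections $a\mapsto a+(-1)^{|a|}\theta_i(a)x_i$, which does work once the $\theta_i$ are square-zero and anticommute.
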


\begin{proof}
    For this proof we keep the same setting as in the paper by assuming $I$ is minimally generated by $a_1,\ldots, a_n, b_1, \ldots, b_m$ such that the classes of $a_1,\ldots, a_n$ form a basis for a free summand $F$ of $I/I^2$ as an $S$-module, and the classes of $b_1, \ldots, b_m$ are a minimal generating set for a complement to $F$. We also recall that $R[X]$ is a minimal model for $S$ such that the set of variables $X_1$ of degree one was chosen such that there is a subset $X_1'= \{x_1, \ldots, x_n\}$ of $X_1$ with $\partial(x_i) =a_i$ for each $i \in \{1, \ldots, n\}$.

    By \cref{prop-minimal}, we have an isomorphism $S[X] \cong W[ z_1,\ldots, z_n \mid \partial(z_i)=0]$, where $W$ is a dg subalgebra of $S[X]$. In particular, this means $\partial(W) \subseteq W$, meaning the differential on $W$ never maps to expressions involving the $z\text{'}s$.

Furthermore, the isomorphism $\alpha: S[X_1] \to W[z_1, \ldots, z_n | \partial(z_i)=0]$ maps each $x_i$ to $z_i$ up to reordering. This means 
\[ \frac{S[X]}{(X_1')} \cong W\]
and since the left hand side is a semi-free extension, this implies $W$ is a semi-free extension of $S$, so that
\[
W[z_1, \ldots, z_n | \partial(z_i)=0] \cong S[Y][z_1,\ldots, z_n| \partial(z_i)=0]
\]
is a minimal semi-free extension of $S$ for some set of variables $Y$.

A common way to construct the homotopy Lie algebra is to consider it as the dual of the $k$-vector space on the variables adjoined in the minimal model, i.e. $\pi^{*}(\varphi) = \{\Sigma k X_i^*\}$, where the Lie bracket is determined from the quadratic part of the differential $\partial^{[2]}$ on $k[X]$, see for example, \cite[appendix B]{Quillen} and \cite[Theorem 4.2]{Avramov84}. More specifically, given

\[
\gamma: \Sigma kX \xrightarrow{\Sigma\partial^{[2]}} \text{Sym}_k^2 (\Sigma kX) \xrightarrow{\delta} \Sigma kX \otimes_k \Sigma kX,
\]
 where $\delta$ is the canonical embedding from the symmetric algebra into the tensor algebra, the Lie bracket is determined by \[\gamma^{*}: (\Sigma kX)^* \otimes _k (\Sigma kX)^* \to  (\Sigma kX)^*,\]
 via the isomorphism $(kX \otimes_k kX)^* \cong (kX)^* \otimes_k (kX)^*$.

 In particular, since there are no variables in $W[z_1, \ldots, z_n]$ that have any $z\text{'}s$ in their images under the differential, then $[z_i^*, - ]=0$ for each $i = \{1, \ldots, n\}$ for $\pi^{\geq2}(\varphi) = \pi^*(\varphi)$. This is precisely what it means for $z_i^*$ to be central.
\end{proof}

We now give an example to show that $\bigwedge I/I^2$ need not split from $\Tor^{R}_*(S,S)$ when $I/I^2$ is not $S$-free. 

\begin{ex}
    Consider the ideal $I=(x^2,xy,xz)$ in the polynomial ring $R=k[x,y,z]$. With a quick Macaulay 2 calculation one can see $\Tor_2(R/I,R/I)$ is generated by the cokernel of the matrix 
 \[   \begin{bmatrix}
        z & y & x & 0 & 0 & 0 & 0 & 0 & 0\\
        0 & 0 & 0 & z & y & x& 0 & 0 & 0 \\
        0 & 0 & 0 & 0 & 0 & 0 & z & y & x \\
    \end{bmatrix},
 \]
    meaning $\Tor_2(R/I,R/I)$ is a $k$-space.

Let $e_1\wedge e_2 , e_1\wedge e_3, e_2\wedge e_3$ generate $\bigwedge^2 I/I^2$. We compute $\bigwedge^2 I/I^2$ as the cokernel of the matrix
 \[   \begin{bmatrix}
        x & 0 & -z & y & 0 & 0 & 0 & 0 \\
        0 & x & y & 0 & 0 & y& z & 0  \\
        0 & 0 & 0 & 0 & y & -x & 0 & z  \\
    \end{bmatrix}.
 \]
This tells us that $ze_1\wedge e_2$ is nonzero in $\bigwedge^2 I/I^2$. Thus, there is no injection from $\bigwedge^2 I/I^2$ to $\Tor_2(R/I,R/I)$, and hence there is no splitting between $\bigwedge I/I^2$ and $\Tor_*^{R}(S,S)$.\end{ex}

We end this section with the question (and answer) of whether there is a larger exterior algebra that splits from $\Tor_{*}^{R}(S,S)$. If we let $I/I^2= F \oplus W$, where $F$ generates a free summand of $I/I^2$, then the question is ``Does $\bigwedge (F \oplus W)$ contain a higher free rank module that splits from $\Tor^{R}_{*}(S,S)$ as $R$ algebras?" 

The following example gives an ideal $I$ where the conormal module $I/I^2$ has no free summands. However, the exterior algebra of the conormal module is isomorphic to the $\Tor$ algebra.

\begin{ex}
    Let $R=k[x,y]$ and $I=(x^2,xy)$. One can check that $\Tor_{n}^{R}(R/I,R/I)=0$ for $n\geq 3$ and for degree reasons $\bigwedge^nI/I^2 =0$ for $n\geq 3$. Furthermore, $\Tor_2^{R}(R/I,R/I)$ is generated by the cokernel of $\begin{bmatrix}
        y & x
    \end{bmatrix}$, which is isomorphic to $\bigwedge^2 I/I^2$. Therefore, $\Tor_{*}^{R}(R/I,R/I) \cong \bigwedge I/I^2$.
\end{ex}

\section{Explicit Maps}
\label{sec:explicit-maps}

In this section, we will define an explicit algebra projection from $\Tor^R_{*}(S,S)$ to $\bigwedge I/I^2$ in lower degrees. We will keep with the general setting of the paper with $R$ a local Noetherian ring, $I$ an ideal of $R$ minimally generated by $a_1, \ldots, a_n$, and $S=R/I$, except we will assume that $I/I^2$ is a free $S$-module for the sake of ease. However, the analogous results hold when we instead consider a free summand of the conormal module. 
In \cref{HLA}, we will mention a connection with the homotopy Lie algebra of $R$. 

Let $R[X]$ be the minimal model of $S$ over $R$. Identifying $S[ X_1 ]= S\langle X_1\rangle$ with $\bigwedge I/I^2$, we construct a concrete map of dg algebras 
\[
\Psi\colon S[ X] \to \bigwedge I/I^2,
\] 
where $\bigwedge I/I^2$ has trivial differential, splitting the natural inclusion $i \colon \bigwedge I/I^2 \to S[ X]$. Using $a_{V^t}$ to denote $a_{i_t}\wedge \cdots \wedge a_{i_1}$ for an ordered list $V^t=i_t,\ldots, i_1$, we can then define the natural inclusion $i$ by sending that sends $\overline{a_{V}}$ to $x_V$ for an ordered set $V \subseteq \{1,\ldots,n\}$ and $x_V \in S\langle X_1 \rangle=\bigwedge (S X_1)$. 

Since $S[X]$ is a free graded commutative algebra over $S$ it is enough to define $\Psi$ on the variables $x\in X$ and extend multiplicatively. We construct $\Psi$ to be a chain map, which reduces to checking the following diagram commutes on the variables. 
\begin{align}
\label{comm-diag2}
\xymatrixcolsep{1.3pc}
\xymatrix{& \cdots \ar[r] & S[ X ]_3 \ar[r]^{\partial} \ar[d]^{\Psi_{3}} & S[ X]_2 \ar[r]^{\partial} \ar[d]^{\Psi_2} & S[ X ]_1 \ar[d]^{\Psi_1} \ar[r]^{\partial} & S[ X ]_0 = S \ar[d]^{\Psi_0} \\
& \cdots \ar[r] &\bigwedge^3 I/I^2 \ar[r]^{0}  & \bigwedge^2 I/I^2 \ar[r]^{0} & I/I^2 \ar[r]^{0} & S\\}
\end{align}

I.e. $\Psi\circ \partial(x)=0$ for each $x\in X$. 

We begin with necessary notation. From now on in this paper, we assume each (finite) set of variables $X_n$ is ordered and denote $x_{j,i}$ to be the $i$th element of degree $j$ in $R[X]$. We also use the condensed notation 
\[
x_{j,V^t}= x_{j,i_t}x_{j,i_{t-1}}\cdots x_{j,i_1}
\]
for an ordered list $V^t=i_t,\ldots,i_1$ of size $t$.

Furthermore, the differential of $x_{j,i}$ for $j>1$ is given by 
\[
\partial(x_{j,i})= \sum_{d_1+\cdots+d_m=j-1} \lambda_{j,(d_1|i_1)\ldots(d_m|i_m)}x_{d_m,i_m}\cdots x_{d_1,i_1},
\] 
where  $d_i \leq d_k$ for $i < k$ and $\lambda_{j,(d_1|i_1)\ldots(d_t|i_t)} \in R$. Note that $\partial(x_{1,i})=a_i$ for $i \in [n]$. We will use a condensed notation for the coefficients of products of the same degree
\[
\lambda_{j,(d_k|i_2)(d_k|i_1)} = \lambda_{j,(d_k|i_2,i_1)}.
\]
Lastly, for later use in defining signs, we set 
\[\sigma(i_j,V)=|\{i \in V : i_j< i\}|.\]

We now begin by defining the map $\Psi$ by starting with the first degree $\Psi_1$ via obvious choice  
\[
\Psi_1: S[X]_1 \to \bigwedge^1 I/I^2= I/I^2 \ \ \ 
{\textrm{ by }} \ \ 
\Psi_1(x_{1,i})=\overline{a_i}.
\]
As we extend the map multiplicatively, this gives
\[
\Psi_1(x_{1,V})=\overline{a_V} \quad \text{for} \quad V\subseteq [n].
\]



In fact, we define a map which we call \[\psi :R[ X ] \to \bigwedge I/I^2\] such that $\psi \circ \partial =0$, which then induces the desired map $\Psi$ since the codomain is annihilated by $I$. 

Before jumping into the general map in low degrees given in Examples \ref{wedge2} and \ref{wedge3}, we provide a smaller example to help the reader follow the notation in the general setting. 
\begin{ex}
    Let $R$ be a ring with an ideal $I$ minimally generated by $a_1,a_2,a_3$ where $I/I^2$ is free on the images $\overline{a_1}, \overline{a_2}, \overline{a_3}$. 

    For this example we will let $R[X]$ be the minimal model for $R/I$, but we assume that $R[X]$ has three degree $1$ variables $x_{1,1} , x_{1,2}, x_{1,3}$, two degree $2$ variables $x_{2,1}, x_{2,2}$, and one degree $3$ variable $x_{3,1}$.

    We first begin defining 
    $$\psi_{1}: R[X]_1 \to \bigwedge^{1} I/I^2 \quad \text{by} \quad\psi_1(x_{1,i})=\overline{a_i}.$$
    We need to verify the chain map condition, so we check $\psi_1(\partial(x_{2,j}))=0$ for $j=1,2$.

    Notice
    $$\partial(x_{2,j}) = \lambda_{j, (1|1)} x_{1,1} + \lambda_{j, (1|2)} x_{1,2} + \lambda_{j, (1|3)} x_{1,3}$$
    for $\lambda_{j, (1|i)} \in R$. Applying the differential once more would imply
    \begin{equation}
\label{small example deg 0}
    0=\partial^2(x_{2,j})= \lambda_{j, (1\mid 1)} a_{1} + \lambda_{j, (1\mid 2)} a_{2} + \lambda_{j, (1 \mid 3)} a_{3}.
\end{equation}

Since $I/I^2$ is free with basis $\overline{a_1},\overline{a_2}, \overline{a_3}$, the image of (\ref{small example deg 0}) in $S$ would imply that the image of $\lambda_{j,(1\mid i)}$ is zero in $S$, or $\lambda_{j,(1 \mid i)} \in I$ in $R$ for  $i = 1,2,3$. Thus, 
\[
\psi_1(\partial(x_{2,j}))= \lambda_{j, (1\mid 1)} \overline{a_{1}} + \lambda_{j, (1\mid 2)} \overline{a_{2}} + \lambda_{j, (1 \mid 3)} \overline{a_{3}} 
\]is zero in $I/I^2$.

In order to define $\psi_2: R[ X]_2 \to \bigwedge^2I/I^2 $, we need decide what the image of each $x_{2,j} \in X_2$ should be. Since each  $\lambda_{j,(1\mid i)} \in I$, we may write each $\lambda_{j,(1\mid i)}$ in terms of our generating set $a_1, a_2, a_3$. Thus, we let
\begin{equation}
\label{smallex deg1_decomp}
\lambda_{j,(1\mid i)}= C_{j,(1\mid 1,i)}a_{1} + C_{j,(1\mid 2,i)}a_{2}  +C_{j,(1\mid 3,i)}a_{3} 
\end{equation}
for some elements $C_{j,(1\mid k,i)}$ in $R$.

We now define $\psi_2$ using the coefficients from (\ref{smallex deg1_decomp}) to determine the image of $x_{2,j}$ under the map:
\begin{align*}
    \psi_2: R[ X]_{2}  &\to \bigwedge^{2} I/I^2\\
    x_{1,V^2} &\mapsto \overline{a_{V^2}}  \\
    x_{2,j} &\mapsto \sum_{V^2}C_{j,(1|V^2)} \,  \overline{a_{V^2}}.
\end{align*}
where $V^2$ is an ordered list of size two in $\{1,2,3\}$.

Note that the first assignment is the multiplicative extension of $\psi_1$. Furthermore, since $\psi(IR[ X]_{2})=0$ we get an induced map from $S[ X]_2$ to $\bigwedge^2 I/I^2$. We now check the chain map condition, i.e. $\psi_2(\partial(x_{3,1})) = 0$. Suppose
\[
\partial(x_{3,1}) = \lambda_{1, (2\mid1)}x_{2,1} + \lambda_{1,(2\mid 2)}x_{2,2} + \lambda_{1,(1\mid 1,2)}x_{1,1}x_{1,2} + \lambda_{1, (1 \mid 1,3)}x_{1,1}x_{1,3}+  \lambda_{1, (1 \mid 2,3)}x_{1,2}x_{1,3}.
\]
Applying $\psi_2$, we see 
\begin{align*}
    \psi_2(\partial(x_{3,1})) &= \lambda_{1,(2\mid 1)}\left[ C_{1,(1 \mid 1,2)}\overline{a_1a_2} + C_{1,(1 \mid 1,3)}\overline{a_1a_3}+ C_{1,(1 \mid 2,3)}\overline{a_2a_3}\right] \\
    &+ \lambda_{1,(2\mid 2)}\left[ C_{2,(1 \mid 1,2)}\overline{a_1a_2} + C_{2,(1 \mid 1,3)}\overline{a_1a_3}+ C_{2,(1 \mid 2,3)}\overline{a_2a_3}\right] \\
    &+ \lambda_{1, (1 \mid 1,2)}\overline{a_1a_2} +\lambda_{1, (1 \mid 1,3)}\overline{a_1a_3}+  \lambda_{1, (1 \mid 2,3)}\overline{a_2a_3}\\
    &= \sum_{i_2<i_1}\left( \lambda_{1,(2 \mid 1)}C_{1,(1 \mid i_2,i_1)}+ \lambda_{1,(2 \mid 2)}C_{2,(1 \mid i_2,i_1)} +\lambda_{1,(1\mid i_2i_1)}\right)\overline{a_{i_2}a_{i_1}}.
\end{align*}
We claim the image of $\left( \lambda_{1,(2 \mid 1)}C_{1,(1 \mid i_2,i_1)}+ \lambda_{1,(2 \mid 2)}C_{2,(1 \mid i_2,i_1)} +\lambda_{1,(1\mid i_2i_1)}\right)  = 0$ in $S$ for  $i_2 <i_1$ and thus $\psi(\overline{\partial}(x_{3,i}))=0$ in $\bigwedge^{2}I/I^2$. We will show this for the coefficient of $\overline{a_1a_3}$ and claim the others follow in a similar fashion (knowing the general proof is to come). Like before, we use the fact that $\partial^2(x_{3,1})=0$, and so one has
\begin{align*}
    0 &= \lambda_{1,(2 \mid 1)}\left(\lambda_{1, (1|1)} x_{1,1} + \lambda_{1, (1|2)} x_{1,2} + \lambda_{1, (1|3)} x_{1,3} \right) + \lambda_{1,(2 \mid 2)}\left( \lambda_{2, (1|1)} x_{1,1} + \lambda_{2, (1|2)} x_{1,2} + \lambda_{2, (1|3)} x_{1,3} \right) \\
    &+ \lambda_{1, (1\mid 1,2)}\left( a_1x_{1,2} - a_2x_{1,1}\right)
    + \lambda_{1, (1\mid 1,3)}\left( a_1x_{1,3} - a_3x_{1,1}\right) + \lambda_{1, (1\mid 2,3)}\left( a_2x_{1,3} - a_3x_{1,2}\right).
\end{align*}
By the observation $\lambda_{j,(1|i)} \in I$ above, we substitute for each $\lambda_{j,(1|i)}$ the expression (\ref{smallex deg1_decomp}). Furthermore, since $\{ x_{1,1},x_{1,2}, x_{1,3}\}$ forms a basis for $R[ X ]_1=RX_1$, the linear independence guarantees that the coefficient of $x_{1,i}$ is $0$ in $R$ for each $i=1,2,3$. Thus, we see that the coefficient of $x_{1,3}$ is
\small{
\[
\lambda_{1,(2|1)}\left( C_{1,(1\mid 3)}a_1 +C_{1,(2\mid 3)}a_2 + C_{1,(3\mid 3)}a_3\right) + \lambda_{1,(2|2)}\left( C_{2,(1\mid 3)}a_1 +C_{2,(2\mid 3)}a_2 + C_{2,(3\mid 3)}a_3\right) + \lambda_{1,(1 \mid 1,3)}a_1 + \lambda_{1,(1 \mid 2,3)}a_2
\]}
which is 0. One may notice, the expression above also lives in $I$. Now consider the image of the expression above in $I/I^2$ as an $S$-linear combination of the basis $\overline{a_1}, \ldots , \overline{a_n}$. By linear independence the coefficients of each $\overline{a_{i}}$ are zero in $S$, and therefore the coefficient of each $a_{i}$ in the expression above lies in $I$. In particular, the coefficient of $a_1$
\[
\lambda_{1,(2|1)} C_{1,(1\mid 3)}+  \lambda_{1,(2|2)} C_{2,(1\mid 3)} + \lambda_{1,(1 \mid 1,3)}
\]
lives in $I$, which is what we wanted to show.
\end{ex}

The next two examples demonstrate how the map $\psi$ can be constructed generally in low degrees. It is easy to see that $\Psi_1: S[ X]_1 \to I/I^2$ induces an isomorphism, so we start with the first non-trivial case. 

\begin{case}
\label{wedge2}

In this case we define $\psi_2: R[ X ]_{2} \to \bigwedge^2I/I^2$ in general and verify that $\psi_{\leq 2} $ is a chain map. 
Using the notation above, we begin by showing $\psi_{1}(\partial(x_{2,j}))=0$ for $x_{2,j}\in X_2$. Notice, 
\[
\partial(x_{2,j}) = \sum_{i} \lambda_{j,(1 \mid i_1)}x_{1,i_1}
\]
for $\lambda_{j,(1\mid i_1)}\in R$. Applying the differential once more would imply
\begin{equation}
\label{deg0}
   0= \partial^2(x_{2,j})= \sum_{i_1}\lambda_{j,(1 \mid i_1 )}a_{i_1}.
\end{equation}
Since $I/I^2$ is free with basis $\overline{a_1},\ldots, \overline{a_n}$, the image of (\ref{deg0}) in $I/I^2$ would imply that the image of $\lambda_{j,(1\mid i_1)}$ is zero in $S$, or $\lambda_{j,(1 \mid i_1)} \in I$ for each $i_1 \in \{1,\ldots, n\}$. Thus, 
\[
\psi(\partial(x_{2,j}))= \sum_{i_1} \lambda_{j,(1\mid i_1)}\overline{a_{i_1}}
\]is zero in $I/I^2$.

We use a similar procedure to produce a map $$\psi_2 : R[ X]_2 \to \bigwedge^2I/I^2 $$ with the property that $\psi_2\left(\partial(x)\right)=0$ for $x\in X_3$. However, it is not clear what the image of each $x_{2,j}\in X_2$ should be under $\psi$.

Since each $\lambda_{j,(1\mid i_1)} \in I$, we may write it in terms of our generating set. Thus, we let
\begin{equation}
\label{deg1_decomp}
\lambda_{j,(1\mid i_1)}= \sum_{i_2} C_{j,(1\mid i_2,i_1)}a_{i_2},
\end{equation}
for some elements $C_{j,(1\mid i_2,i_1)}$ in $R$.

The coefficients in (\ref{deg1_decomp}) become useful in the next line as we define the chain map on $R[ X ]_{2}$. Let $V^{2} \subseteq \{1,\ldots,n\}$ denote any ordered list of length 2, then we can define $\psi_{2}$ by 
\begin{align*}
    \psi_2: R[ X]_{2}  &\to \bigwedge^{2} I/I^2\\
    x_{1,V^2} &\mapsto \overline{a_{V^2}}  \\
    x_{2,j} &\mapsto \sum_{V^2}C_{j,(1|V^2)} \,  \overline{a_{V^2}}.
\end{align*}
Note the first assignment is the multiplicative extension of $\psi_1$ and that $\psi(IR[ X]_{2})=0$, so $\psi$ induces a map from $S[ X]_2$ to $\bigwedge^2 I/I^2$.

One needs to check $\psi_2(\partial(x_{3,k})) = 0$ for all $x_{3,k}\in X_{3}$.
Indeed,
\begin{align*}
    \psi_2 \left( \partial(x_{3,k})\right) &= \sum_{j}\lambda_{k,(2|j)}\sum_{V^2}C_{j,(1|V^2)} \, \overline{a_{V^2}} + \sum_{V^2}\lambda_{k
    , (1|V^2)}\overline{a_{V^2}}\\
    &= \sum_{V^2}\left(\sum_{j}\lambda_{k,(2|j)}C_{j,(1|V^2)} +\lambda_{k, (1|V^2)} \right) \overline{a_{V^2}}.
\end{align*}
We claim the image of $\left(\sum_{j}\lambda_{k,(2|j)}C_{j,(1,V^2)} +\lambda_{k, (1|V^2)} \right)  = 0$ in $S$ for each $V^2$ and thus $\psi(\overline{\partial}(x_{3,i}))=0$ in $\bigwedge^{2}I/I^2$. 
To see this, we use the fact  $\partial^2(x_{3,k})=0$ which gives
\begin{align*}
    &0= \partial \left(\sum_{j} \lambda_{k,(2|j)}x_{2,j} + \sum_{V^2}\lambda_{k, (1|V^2)}x_{1,V^2} \right) \\ 
    &=\sum_{j}\lambda_{k,(2|j)}\left( \sum_{i_1} \lambda_{j,(1|i_1)}x_{1,i_1} \right) + \sum_{V^2}\lambda_{k, (1|V^2)} \left( \sum_{i_v\in V^2}(-1)^{\sigma(i_v,V^2\setminus i_v)} a_{i_v}x_{1,V^2\setminus\{i_v\}}\right).
\end{align*}

By the observation that $\lambda_{j,(1|i_1)} \in I$ above, we substitute   $ \sum_{i_2} C_{j,(1,i_2i_1)}a_{i_2}$ for $\lambda_{j,(1|i_1)}$ by (\ref{deg1_decomp}). Furthermore, since $\{ x_{1,1}, \ldots, x_{1,n}\}$ forms a basis for $R[ X ]_1=RX_1$, the linear independence guarantees the coefficient of $x_{1,i_1}$ is $0$ in $R$ for each $i_1 \in \{1, \ldots, n\}$. Thus,
\begin{equation}
\label{coeff3}
    \sum_{j}\lambda_{k,(2|j)}\sum_{i_2}C_{j,(1|i_2,i_1)}a_{i_2} + \sum_{i_{r}< i_1}\lambda_{k, (1|i_r,i_1)}a_{i_r}- \sum_{i_1<i_r}\lambda_{k, (1|i_r,i_1)}a_{i_r} = 0.
\end{equation}

One may notice, the expression above also lives in $I$. Now consider the image of the expression above in $I/I^2$ as an $S$-linear combination of the basis $\overline{a_1}, \ldots , \overline{a_n}$. By linear independence the coefficients of each $\overline{a_{i_2}}$ are zero in $S$, and therefore the coefficient of each $a_{i_2}$ in the expression above lies in $I$. In particular, for any $i_2 < i_1$ the coefficient of $a_{i_{2}}$ from (\ref{coeff3}) satisfies 
$$
\sum_{j}\lambda_{k,(2|j)}C_{j,(1|i_2,i_1)} + \lambda_{k, (1|i_2,i_1)} \in I.
$$ 

Thus, for an ordered sequence $V^2 \subseteq \{1, \ldots, n\}$, the image of
\begin{equation}
\label{deg2}
\sum_{j}\lambda_{k,(2|j)}C_{j,(1|V^2)} +\lambda_{k, (1|V^2)}
\end{equation}
in $S$ is zero, which proves the claim.
\end{case}

\begin{remark} 
\label{HLA}  Under the assumption that $I/I^2$ is free, we get a partial recovery of \cref{centralrecovery} from the construction in \cref{wedge2}, namely we see how degree $2$ elements commute with degree $2$ elements in the homotopy Lie algebra.  That is, if $R[X]$ is the minimal model of  $\varphi: R \to S$ where $X_1$ are the degree one elements adjoined whose differential gives a set of minimal generators of $I/I^2$, then we can show that $[x_{1,i_2}^*, x_{1,i_1}^*]=0$ for all $i_2,i_1 \in \{1, \ldots, n\}$. 
 
Recall, $\partial(x_{3,k})=\sum_{j} \lambda_{k,(2|j)}x_{2,j} + \sum_{V^2}\lambda_{k, (1|V^2)}x_{1,V^2}$, where $\lambda_{k,(2|j)} \in m$ by definition of the minimal model. This means we know that the bracket on $x_{1,i_2}^*$ and $ x_{1,i_1}^*$ is
 \[
 [x_{1,i_2}^*, x_{1,i_1}^*] = \sum_{k}  \overline{\lambda_{k,(1|i_2,i_1)}} x_{3,k}^*.
 \]
 In Case \ref{wedge2}, we showed $\sum_{j}\lambda_{k,(2|j)}C_{j,(1|i_2,i_1)} + \lambda_{k, (1|i_2,i_1)}\in I \subseteq m$, so since $\lambda_{k,(2|j)} \in m$, we have that $\lambda_{k,(1|i_2,i_1)}$ lives in $m$ as well. Thus,
 \[
 [x_{1,i_2}^*, x_{1,i_1}^*]=0
 \]
for all $i_2,i_1 \in \{1, \ldots, n\}$.
 
\end{remark}
 
\begin{case}
\label{wedge3}
    In this case we show $\psi: R[ X_{\leq3}]\to\bigwedge I/I^2$ is a chain map by expanding upon \cref{wedge2} using the same notation and results. By induction, we only need define $\psi(X_3)$ and show that $\psi(\partial(x_{4,p}))=0$ for all $p$. 

    Let $V^3$ denote any strictly ordered list of length $3$ contained in $\{1,\ldots,n\}$, then
    \[
    \partial(x_{4,p})= \sum_{k}\lambda_{p,(3|k)}x_{3,k} + \sum_{i_r}\sum_{j} \lambda_{p,(1|i_r)(2|j)}x_{1,i_r}x_{2,j}+\sum_{V^3}\lambda_{p,(1|V^3)}x_{1,V^3}.
    \]
    
    In order to apply $\psi$ to $\partial(x_{4,p})$, we need to decide where $x_{3,k}$ maps to. Recall from \cref{wedge2}, we found that the expression (\ref{deg2}) lives in $I$ for each list $V^2\subseteq [n]$ of length $2$. This allows us to write (\ref{deg2}) as a linear combination of the generators of $I$, 
    \begin{equation}
    \label{deg2-decomp}
    \sum_{j}\lambda_{k,(2|j)}C_{j,(1|i_2,i_1)} + \lambda_{k, (1|i_2,i_1)} = \sum_{i_3}C_{(3\mid k)(1 \mid i_3,i_2,i_1)}a_{i_3}.
    \end{equation}

    Similarly to the previous example, we use the new coefficient from (\ref{deg2-decomp}) to define
    \[
    \psi\left( x_{3,k} \right)= \sum_{V^3} C_{(3,k)(1|V^3)}\overline{a_{V^3}}
    \]
    for each $k$ and where $V^3$ denotes any strictly ordered list of length three in $\{1,\ldots, n\}$ and extend $\psi$ on all other degree three elements multiplicatively from $\psi_1$ and $\psi_2$.
    
    Applying $\psi$ to $\partial(x_{4,p})$ we have 
\begin{align*}
\psi(\partial(x_{4,p})) &=\sum_{k}\sum_{V^3}\lambda_{p,(3|k)}C_{k,(1|V^3)}\overline{a_{V^3}} + \sum_{i_r}\sum_{j} \lambda_{p, (1|i_r)(2|j)}\overline{a_{i_r}}\sum_{V^2}C_{j,(1|V^2)} \overline{a_{V^2}}+ \sum_{V^3} \lambda_{p,(1|V^3)}\overline{a_{V^3}}\\
&=\left( \sum_{k}\lambda_{p,(3|k)}C_{k,(1|V^3)} + \sum_{j}\sum_{i_r \sqcup V^2 = V^3} (-1)^{\sigma(i_r,V^2
)}\lambda_{p, (1|i_r)(2|j)}C_{j,(1|V^2)} + \lambda_{p,(1|V^3)}\right)\overline{a_{V^3}}.
\end{align*}
    For each $V^3$, we claim the coefficient of $\overline{a_{V^3}}$ from above is $0$ in $S$.

With a similar approach to the previous example, we apply the differential to $x_{4,p}$ twice. 
\begin{multline}
\label{part24}
    \partial^2(x_{4,p}) = \sum_{k} \lambda_{p,(3|k)} \left( \sum_{j} \lambda_{k,(2|j)}x_{2,j} + \sum_{V^2} \lambda_{k,(1| V^2)} x_{1,V^2}\right)\\
    + \sum_{i_r}\sum_{j} \lambda_{p,(1|i_r)(2|j)}\left(a_{i_r} x_{2,j}- x_{1,i_r} \sum_{i_1} \lambda_{j,(1|i_1)}x_{1,i_1}  \right) + \sum_{\substack{V^3 \\ i_r \sqcup V^2 = V^3}} \lambda_{p,(1|V^3)} (-1)^{\sigma(i_r,V^2)}a_{i_r}x_{1,V^2}.
    \end{multline}
Since $\partial^{2}(x_{4,p})=0$ and $R[ X ]_{2}$ is free, we know that the coefficient of each degree $2$ monomial in \cref{part24} is also zero in $R$. Without loss of generality, we let $V_1^3=\{i_3,i_2,i_1\}$ and $V^2_1=\{i_2,i_1\}$ to demonstrate the exact procedure. Notice the coefficient of each $x_{2,j}$ is 
\begin{equation}
\label{coeff2}
\sum_{k}\lambda_{p,(3|k)}\lambda_{k,(2|j)} + \sum_{i_r} \lambda_{p,(1|i_r)(2|j)}a_{i_r},
\end{equation}
while the coefficient of $x_{1,V_1^2}$ is 
\begin{equation}
\label{coeff1}
\sum_{k} \lambda_{p,(3|k)}\lambda_{k,(1|V_1^2)}-\sum_{j}\sum_{i_r\sqcup i_l=V_1^2}(-1)^{\sigma(i_r,i_l)}\lambda_{p,(1|i_r)(2|j)}\lambda_{j,(1|i_l)} + \sum_{\substack{V^3_1 \\ i_r\sqcup V_1^2= V^3_1}}(-1)^{\sigma(i_3,V^2_1)}\lambda_{p,(1|V^3_1)}a_{i_r},
\end{equation}
so both expressions are $0$.
We now utilize the previous expression \cref{deg2}. First we multiply the coefficients of each $x_{2,j}$ in \cref{coeff2} by $C_{j,(1|V^2_1)}$, respectively, and add the sum of these coefficients to the sum of those in \cref{coeff1}. With some rearranging, we get the expression
\begin{multline}
\label{deg4ex}
    \sum_{k}\lambda_{p,(3|k)}\left( \sum_j\lambda_{k,(2|j)}C_{j,(1|V^2_1)} + \lambda_{k,(1|V^2_1)}\right) + \sum_j\sum_{i_r} \lambda_{p,(1|i_r)(2,j)}C_{j,(1|V^2_1)}a_{i_r}\\
    -\sum_j\sum_{i_r\sqcup i_l=V_1^2}(-1)^{\sigma(i_r,i_l)}\lambda_{p,(1|i_r)(2|j)}\lambda_{j,(1|i_l)} + \sum_{\substack{V^3_1 \\ i_r\sqcup V_1^2= V^3_1}}(-1)^{\sigma(i_r,V^2_1)}\lambda_{p,(1|V^3_1)}a_{i_r},
\end{multline}
which is equal to $0$ in $R$.

 Now, recall the expression in (\ref{deg2}) is in $I$. Thus, there is a decomposition as shown in (\ref{deg2-decomp})
\[
\sum_j\lambda_{k,(2|j)}C_{j,(1|V^2_1)} + \lambda_{k,(1|V^2_1)}= \sum_{i_r}C_{k,(1|i_rV^2_1)}a_{i_r}.
\]
Similarly, we showed $\lambda_{j,(1|i_l)}$ lives in $I$ as well for which we wrote 
\[
\lambda_{j,(1|i_l)}=\sum_{i_t}C_{j,(1|i_ti_l)}a_{i_t}.
\]
Substituting each decomposition into (\ref{deg4ex}) we obtain an expression that can be written as a linear combination of the generators of $I$ and is equal to $0$:
\begin{multline*}
    \sum_{k}\lambda_{p,(3|k)}\left( \sum_{i_r}C_{k,(1|i_rV^2_1)}a_{i_r}\right) + \sum_j\sum_{i_r} \lambda_{p,(1|i_r)(2,j)}C_{j,(1|V^2_1)}a_{i_r}\\
    -\sum_j\sum_{i_r\sqcup i_l=V_1^2}(-1)^{\sigma(i_r,i_l)}\lambda_{p,(1|i_r)(2|j)}\sum_{i_t}C_{j,(1|i_ti_l)}a_{i_t} + \sum_{\substack{ i_r\sqcup V_1^2= V^3_1}}(-1)^{\sigma(i_r,V^2_1)}\lambda_{p,(1|V^3_1)}a_{i_r}.
\end{multline*}
Now consider the image of the expression above in $I/I^2$ as an $S$-linear combination of the basis $\overline{a_1}, \ldots , \overline{a_n}$. By independence the coefficients of each $\overline{a_{i_r}}$ are zero in $S$, and therefore we can isolate the coefficient of $a_{i_3}$ shown below, which must live in $I$
\begin{align*}
&\sum_k \lambda_{p,(3|k)}C_{k,(1|V_1^3)} + \sum_{j}\lambda_{p,(1|i_3)(2|j)}C_{j,(1|V_1^2)} -\sum_{i_r \sqcup i_l} \sum_{j} (-1)^{\sigma(i_r,i_l)}\lambda_{p,(1|i_r)(2|j)}C_{j,(1|i_3i_l)}+\lambda_{p,(1|V_1^3)}\\
&= \sum_{k}\lambda_{p,(3|k)}C_{k,(1|V^3_1)} + \sum_{j}\sum_{i_r \sqcup V^2 = V_1^3} (-1)^{\sigma(i_r,V^2
)}\lambda_{p, (1|i_r)(2|j)}C_{j,(1|V^2)} + \lambda_{p,(1|V_1^3)}.
\end{align*}
Thus, for any ordered sequence $V^3 \subseteq [n]$, the image of  
\begin{equation}
\label{deg3}
\sum_{k}\lambda_{p,(3|k)}C_{k,(1|V^3)} + \sum_{j}\left(\sum_{i_r \sqcup V^2 = V^3} (-1)^{\sigma(i_r,V^2
)}\lambda_{p, (1|i_r)(2|j)}C_{j,(1|V^2)}\right) + \lambda_{p,(1|V^3)}
\end{equation}
is zero under $\psi$, which is what we needed to show.

Since we have shown (\ref{deg3}) lives in $I$, then as we have done before, we can write $(\ref{deg3})$ as a linear combination of the generators in $I$.
\begin{equation}
    \sum_{k}\lambda_{p,(3|k)}C_{k,(1|V^3)} + \sum_{j} \left(\sum_{i_r \sqcup V^2 = V^3} (-1)^{\sigma(i_r,V^2
)}\lambda_{p, (1|i_r)(2|j)}C_{j,(1|V^2)} \right)+ \lambda_{p,(1|V^3)} = \sum_{i_4}C_{(1|i_4,V^3)}a_{i_4},
\end{equation}
which we would then use to define $\psi$ on degree $4$ variables.

\end{case}

We can continue this process iteratively to define $\psi $ for each variable $x_{t,j}$ of degree $t$, using the expressions from each lower degree case to prove that the relevant linear combinations live in $I$, yielding that $\psi(\partial(x))=0$ for all $x \in X$ as desired to obtain a chain map. However, the expressions become unwieldy to write down, so we end here.

\section{Acknowledgements}
Special thanks to Claudia Miller for all of her guidance on this project. Thanks also to Ben Briggs, Srikanth Iyengar, Graham Leuschke, and Josh Pollitz for helpful discussions and suggestions. Lastly, a big thank you to Kory Pollicove and Dorian Kalir for keeping me sane through the weeds. Several results in this paper were first conjectured based on examples computed using \textit{Macaulay2} \cite{M2}.

\nocite{*}
\printbibliography

\end{document}